\theoremstyle{plain}
 \newtheorem{thm}{Theorem}[section]
 \newtheorem{cor}[thm]{Corollary}
 \newtheorem{lem}[thm]{Lemma}
 \newtheorem{prop}[thm]{Proposition}
 \newtheorem{que}[thm]{Question}
 \newtheorem{rem}[thm]{Remark}
 \newcommand{\R}{\mathbb R}
\begin{document}

\title{Maximal knotless graphs}

\author{Lindsay Eakins}
\author{Thomas Fleming}
\author{Thomas W.~Mattman}
\address[LE \& TWM]{Department of Mathematics and Statistics,
California State University, Chico,
Chico, CA 95929-0525}
\email{TMattman@CSUChico.edu}


\begin{abstract}
A graph is maximal knotless if it is edge maximal for the property of knotless embedding in $\R^3$. 
We show that such a graph has at least $\frac74 |V|$ edges, and construct an infinite family of maximal
knotless graphs with $|E| < \frac52|V|$. With the exception of $|E| = 22$, we 
show that for any $|E| \geq 20$ there exists a maxmal knotless graph of size $|E|$. We  classify the maximal knotless graphs through nine vertices and 20 edges. We determine which of these maxnik graphs are the clique sum of smaller graphs and construct an infinite family of maxnik graphs that are not clique sums.

\end{abstract}

\maketitle

\section{Introduction}

A graph $G$ is {\em maximal planar} if it is edge maximal for the property of being a planar graph.   That is, $G$ is either a planar complete graph, or else adding any missing edge to $G$ results in a non-planar graph. Maximal planar graphs are triangulations and are characterized by the number of edges: a planar graph with $|V| \geq 3$ is maximal planar if and only if $|E| = 3|V|-6$.
 
Naturally, planarity is not the only property of graphs that that can be studied with respect to edge maximality.  A graph is {\em intrinsically linked} if every embedding of the graph in $\R^3$ contains a non-split link.
Some early results on {\em maximal linkless} (or  {\em maxnil}) graphs--those that are edge maximal for the property of not being intrinsically linked--include a family of maximal linkless graph with $3|V|-3$ edges \cite{J}, and the fact that the graph $Q(13,3)$ is a splitter for intrinsic linking, a property that implies it is maximal linkless \cite{Mh}. Recently there have been several new results including families of maxnil graphs with $3|V|-3$ edges (rediscovering J{\o}rgensen's examples) \cite{DF}, with $\frac{14}{5}|V|$ edges \cite{A}, and with $\frac{25}{12}|V|$ edges \cite{NPP}.  Lower bounds for the number of edges required for a maxnil graph have been established \cite{A}, and methods for creating new maxnil graphs via clique sum have been developed \cite{NPP}.

We extend this work with what appears to be the first study of {\em maximal knotless} graphs.
A graph is {\em intrinsically knotted (IK)} if every embedding in $\R^3$ includes a non-trivially knotted cycle, and a graph is {\em not 
IK} or {\em nIK} if it has a knotless embedding, that is, an embedding in which every cycle is a trivial knot. We will call a 
graph that is edge maximal for the nIK property maximal knotless or {\em maxnik.}

In Section \ref{sec:class}, we establish a connection between maximal 2-apex graphs and maxnik graphs, specifically that a 2-apex graph is maxnik if and only if it is maximally 2-apex. This connection is instrumental in allowing the identification of all maxnik graphs with nine or fewer vertices, and with 20 or fewer edges.
We remark that there is an analogous connection between maximal apex graphs and maxnil graphs that may be of independent interest.

We consider clique sums of maxnik graphs in Section \ref{sec:cliq}, and are able to establish similar, if weaker, results to those of \cite{NPP}.  Most importantly, we show that the edge sum of two maxnik graphs $G_1$ and $G_2$ on an edge $e$ is maxnik if $e$ is non-triangular (i.e., not part of a $3$-cycle)
in at least one $G_i$. Similarly, we provide conditions that ensure that the clique sum over $K_3$ of two maxnik graphs is again maxnik.
These results are used in Section \ref{sec:bounds} to construct new maxnik graphs from those found in Section \ref{sec:class}.

We then turn to studying general properties of maxnik graphs in Section \ref{sec:bounds}.  We establish a lower bound for the number of edges in a maxnik graph of $\frac74|V|$, and construct an infinite family of maxnik graphs with fewer than $\frac52|V|$ edges.  A maximal planar graph has $|E| = 3|V| - 6$, and maximal $k$-apex graphs also have a fixed number of edges depending on $|V|$. In contrast, the number of edges in maxnil and maxnik graphs can vary. We show that, except for
$|E| = 22$, given any $|E| \geq 20$, there exists a maxnik graph of size $|E|$.

We will call a maxnik graph {\em composite} if it is the clique sum of two smaller graphs.  Otherwise we say it is {\em prime}. These terms are analogous to knots, where a knot is composite if is the connected sum of two non-trivial knots, and prime otherwise.
The infinite families of maxnik graphs constructed in Section \ref{sec:bounds} are all composite, as they are clique sums of smaller maxnik graphs.  In Section \ref{sec:prime}, we  classify the maxnik graphs found in Section \ref{sec:class} and construct an infinite family of prime maxnik graphs.

\section{Classification through order nine and size 20}
\label{sec:class}

\begin{thm} 
\label{thm:mind}%
A maxnik graph is $2$-connected. If $|V| \geq 3$, then $\delta(G) \geq 2$.
If $|V| \geq 7$, then $20 \leq |E| \leq 5n-15$.
\end{thm}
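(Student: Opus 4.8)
The plan is to treat the three assertions separately, handling connectivity and minimum degree by edge-addition arguments and the edge bounds by extremal graph theory. Throughout I would use three standing facts: the class of nIK graphs is minor-closed; $K_7$ is intrinsically knotted, so any graph with a $K_7$ minor is IK; and suppressing a degree-two vertex (or deleting a vertex of degree at most one) does not change whether a graph is IK.

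To see that a maxnik $G$ is connected, suppose not and add an edge $e$ joining two components. Then $e$ is a bridge of $G+e$, so it lies on no cycle and every cycle of $G+e$ is already a cycle of $G$; a knotless embedding of $G$ therefore yields one of $G+e$, contradicting maximality. For the degree bound, suppose some $u$ has degree one with unique neighbor $w$. Since $|V|\ge 3$ and $G$ is connected, $w$ has a neighbor $v\neq u$, and $uv\notin E$. By maximality $G+uv$ is IK, but in $G+uv$ the vertex $u$ has degree two with neighbors $w$ and $v$; suppressing it returns $(G-u)+wv=G-u$ (the edge $wv$ being already present), a subgraph of the nIK graph $G$ and hence nIK. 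This contradiction gives $\delta(G)\ge 2$.

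It remains to rule out a cut vertex $w$, so write $G=A\cup B$ with $A\cap B=\{w\}$ and pick $u\in A$, $v\in B$, both $\neq w$; the non-edge $uv$ makes $G+uv$ IK by maximality, and I would contradict this by exhibiting a knotless embedding of $G+uv$. Start from knotless embeddings of $A$ and $B$, shrink the embedding of $B$ into a small ball $D$ about $w$ meeting $A$ only near $w$, and route the new edge $uv$ in a thin tube that first follows a fixed $u$--$w$ path $P_0$ of $A$ and then enters $D$ to reach $v$. Every cycle then lies entirely in $A$, entirely in $B$, or has the form $uv\cup P_B\cup P_A$ with $P_B$ a $v$--$w$ path in $B$ and $P_A$ a $w$--$u$ path in $A$; the first two types are unknotted by construction, and a cycle of the third type is isotopic, after collapsing the part inside $D$, to $P_A\cup P_0'$ where $P_0'$ is the pushoff of $P_0$. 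The delicate point --- and the main obstacle of the whole argument --- is verifying that these last cycles are unknotted: because $P_0'$ runs parallel to a genuine path of $A$, the cycle $P_A\cup P_0'$ is isotopic to a cycle of the knotlessly embedded $A$ (with any edges shared by $P_A$ and $P_0$ cancelling in parallel pairs), hence unknotted. Granting this, $G+uv$ is nIK, a contradiction, so $G$ has no cut vertex and is $2$-connected.

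Finally I turn to the size bounds for $|V|\ge 7$. For the lower bound, $K_{|V|}\supseteq K_7$ is IK, so a maxnik $G$ is not complete and has a non-edge $e$; by maximality $G+e$ is IK, and since every IK graph has at least $21$ edges (Johnson--Kidwell--Michael), $|E|=|E(G+e)|-1\ge 20$. For the upper bound, $G$ is nIK and hence has no $K_7$ minor, so Mader's extremal theorem --- which for $3\le p\le 7$ bounds the edges of a $K_p$-minor-free graph on $|V|$ vertices by $(p-2)|V|-\binom{p-1}{2}$ --- gives, with $p=7$, the bound $|E|\le 5|V|-15$. This completes the plan.
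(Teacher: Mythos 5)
Your treatment of connectedness, minimum degree, and the two edge bounds is sound and essentially matches the paper's (the paper adds the new edge directly along the path $v,u,w$ rather than invoking vertex suppression, and it derives the bounds from the $21$-edge theorem and Mader's theorem exactly as you do). The gap is in the cut-vertex argument, precisely at the point you flag as delicate. Because you add an edge between \emph{arbitrary} vertices $u\in A$ and $v\in B$, the cycle you must analyze closes up on the $A$-side into $P_A\cup P_0'$, where $P_0'$ is a pushoff of an entire $u$--$w$ \emph{path}. When $P_A$ and $P_0$ share edges this curve is a closed walk of $A$ realized with parallel strands, not a cycle of $A$, and ``cancelling in parallel pairs'' is not an isotopy: the two traversals of a shared edge are in general far apart along the curve, and removing them is a band move, not an isotopy. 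Concretely, if $P_A$ and $P_0$ agree on a middle segment $m$ and differ before and after it, then $P_A\cup P_0'$ is a band sum of two cycles $C_1,C_2$ of $A$ along a band following $m$. Each $C_i$ is unknotted in the knotless embedding, but a band sum of unknots can be knotted --- and since knotless does not imply linkless, $C_1\cup C_2$ may be a nonsplit link, which is exactly the situation in which such band sums produce nontrivial knots (this is the phenomenon that powers the standard intrinsic-knotting arguments). A similar problem hides in ``collapsing the part inside $D$'': the arc of the cycle inside the ball need not be boundary-parallel, so that collapse is not an isotopy either.

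The fix is to use your freedom in choosing which non-edge to add: take $a$ and $b$ to be \emph{neighbors of the cut vertex} in $A$ and in $B$ respectively (they exist by connectivity), and embed so that the triangle formed by $a$, $b$, and the cut vertex bounds a disk. Then every cycle meeting both sides must contain the cut vertex and the edge $ab$, and sliding $ab$ across the disk exhibits that cycle as a genuine connected sum of a cycle of $A$ (closed up by the single edge from $a$ to the cut vertex) and a cycle of $B$; both factors are honest cycles in knotless embeddings, hence unknots, so the sum is unknotted. This is exactly the paper's argument, and it is the choice of the added edge --- not extra topological machinery --- that makes the step go through.
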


\begin{proof}
Suppose $G$ is maxnik. If $G$ is not connected, then, in a knotless embedding, add an edge $e$
to connect two components. This is a knotless embedding of $G+e$, contradicting $G$ being maximal knotless. 

Suppose $G$ has connectivity one with cut vertex $v$. 
Label the two components of $G \setminus v$ as $A$ and $B$.  Let $a$ be a neighbor of $v$ in $A$ and $b$ be a neighbor of $v$ in $B$.  These must exist as $G$ is connected.  We will argue that $G + ab$ is also nIK, a contradiction. 

Form an embedding of $G$ as follows: Embed $A$ and $B$ so that they are knotless and disjoint.  Embed $v$ on a plane separating them.  Embed edges from $v$ to $A$ on the $A$ side of the plane, edges from $v$ to $B$ on the $B$ side, so that the embedding remains knotless.  
Now, isotope the rest of $A$ (and $B$) until edge $va$ (and $vb$) is embedded in the plane.  Next add edge $ab$ so that the triangle $abv$ bounds a disk.

Any cycle contained in $A$ (or in $B$) is an unknot.  Any cycle c that uses vertices from both $A$ and $B$ must use at least
two vertices in the triangle $abv$.  Since $abv$ bounds a disk, this means the cycle $c$ is a connected sum of a cycle in $A$ and a cycle in $B$.  Since those are unknots, $c$ must be as well. This shows $G+ab$ is nIK, contradicting $G$ being maxnik.
So a maxnik graph cannot have connectivity one and must be $2$-connected.

Suppose $G$ is a maxnik graph with $|V| \geq 3$. Since $G$ is connected, $\delta(G) > 0$. If $v \in V(G)$ has degree one, let $u$ be the neighbor of $v$ and $w \neq v$ a different neighbor of $u$. In a knotless embedding of $G$, we can introduce
the edge $vw$ that closely follows the path $v, u, e$. This gives a knotless embedding of $G+vw$, contradicting 
the maximality of $G$.

Suppose $G$ is maxnik with $|V| \geq 7$. The lower bound on size is a consequence of the observation \cite{JKM,M} that 
an IK graph has at least 21 edges. The upper bound follows as a graph with $|E| \geq 5|V| - 14$ has a $K_7$ minor
and is therefore IK \cite{Md, CMOPRW}.
\end{proof}

In Theorem~\ref{thm:NPP5} below, we construct an infinite family of maxnik graphs, each with $\delta(G) = 2$. 

\begin{thm} 
\label{thm:twoapex}%
A $2$-apex graph is maxnik if and only if it is maximal $2$-apex.
\end{thm}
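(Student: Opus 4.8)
The plan is to prove the two implications separately; both rest on the known fact that every $2$-apex graph is nIK, equivalently that every IK graph fails to be $2$-apex. Assume throughout that $G$ is $2$-apex. The forward implication, maxnik $\Rightarrow$ maximal $2$-apex, is then immediate: if $G$ is maxnik, then $G+e$ is IK for every non-edge $e$, and since no $2$-apex graph is IK, each $G+e$ fails to be $2$-apex, which is exactly the assertion that $G$ is maximal $2$-apex.

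The reverse implication, maximal $2$-apex $\Rightarrow$ maxnik, is where the work lies, and the main obstacle is that one cannot simply invert the cited implication: a non-$2$-apex graph need not be IK, so knowing that $G+e$ is not $2$-apex does not yet show it is IK. My plan is to remove this obstacle by first pinning down the rigid structure of maximal $2$-apex graphs. Fix a realizing apex pair $\{a,b\}$ and set $P := G - \{a,b\}$, a planar graph. For any $w \in V(P)$ with $aw \notin E(G)$, adding $aw$ leaves $(G+aw) - \{a,b\} = P$ planar, so $G+aw$ would still be $2$-apex, contradicting maximality; hence $a$, and by the same token $b$, is adjacent to every vertex of $P$, and the identical trick forces $ab \in E(G)$. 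Applying the same argument to a non-edge lying inside $P$ forces $P$ to be maximal planar. Thus every maximal $2$-apex graph is the join of a maximal planar graph $P$ with the edge $ab$.

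With this structure in hand the conclusion is a counting argument. Writing $n = |V(P)| \geq 3$, the triangulation $P$ contributes $3n-6$ edges, each apex vertex contributes $n$, and $ab$ contributes one, so $|E(G)| = 5n-5 = 5|V(G)| - 15$. For any non-edge $e$ the graph $G+e$ then has $5|V| - 14$ edges, so by the minor bound invoked for Theorem~\ref{thm:mind} it contains a $K_7$ minor and is IK \cite{Md, CMOPRW}. Since $G$ is itself nIK (being $2$-apex), this shows $G$ is edge-maximal for the nIK property, i.e.\ maxnik. I expect the only loose ends to be the degenerate cases $|V(P)| < 3$, where $G$ is a complete graph and both properties hold vacuously, and confirming that the $K_7$-minor bound is being applied in its meaningful range $|V| \geq 7$; the crux is the structural rigidity, after which the edge count does all the work.
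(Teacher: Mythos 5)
Your proof is correct and follows essentially the same route as the paper's: both directions reduce to the facts that $2$-apex graphs are nIK and that a maximal $2$-apex graph on $|V|\geq 7$ vertices has $5|V|-15$ edges, so that adding any edge forces a $K_7$ minor. The only difference is that you explicitly derive the structure (join of a maximal planar graph with $K_2$) and hence the edge count, which the paper simply asserts; that is a welcome extra detail but not a different argument.
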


\begin{proof}
Let $G$ be $2$-apex. If $G$ is not maximal $2$-apex, then there is an edge $e$ so that $G+e$ is $2$-apex, hence nIK \cite{BBFFHL,OT}. This 
shows that $G$ is not maxnik. Conversely, if $G$ is maximal $2$-apex there are two cases, depending on $|V|$. If $n = |V| < 7$,
then $K_n$ is $2$-apex, so $G = K_n$. But, $K_n$ is also nIK and therefore maxnik. If $|V| \geq 7$, then $|E| = 5|V|-15$. Since
$G$ is 2-apex, it is nIK. Adding any edge $e$, we have $G+e$ with $5|V|-14$ edges. It follows that $G$ has a $K_7$ minor and is 
IK \cite{M, CMOPRW}. This shows that $G$ is maxnik.
\end{proof}

A similar result, with essentially the same proof, holds for maxnil.

\begin{thm}
An apex graph is maxnil if and only if it is maximal apex.
\end{thm}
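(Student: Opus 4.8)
The plan is to copy the proof of Theorem~\ref{thm:twoapex} almost verbatim, replacing knotting data by the corresponding linking data: wherever that argument used ``$2$-apex $\Rightarrow$ nIK'', the $K_7$ minor threshold $5|V|-14$, and the fact that $K_n$ is $2$-apex for $n<7$, I substitute ``apex $\Rightarrow$ nIL'', the $K_6$ minor threshold $4|V|-9$, and the fact that $K_n$ is apex for $n<6$. I would argue the two implications separately.

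For the forward direction I argue by contrapositive. Suppose $G$ is apex but not maximal apex, so there is a missing edge $e$ with $G+e$ apex. An apex graph is linklessly embeddable: if $(G+e)-v$ is planar, embed it in a plane, place $v$ just off the plane, and join it to its neighbors by arcs meeting the plane only at their endpoints, producing an embedding of $G+e$ with no nonsplit link. Hence $G+e$ is nIL, so $G$ was not maxnil.

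For the converse, let $G$ be maximal apex and put $n=|V|$. If $n<6$, then $K_n$ is apex (since $K_{n-1}$ is planar for $n-1\le 4$), so maximality forces $G=K_n$; as $K_6$ is the smallest intrinsically linked complete graph, $K_n$ is nIL and therefore maxnil. If $n\ge 6$, I first record that a maximal apex graph has exactly $|E|=4n-10$ edges: for any apex vertex $v$, maximality forces $v$ to be adjacent to every other vertex and forces $G-v$ to be maximal planar, hence to have $3(n-1)-6$ edges, so $|E|=(3(n-1)-6)+(n-1)=4n-10$. Since $G$ is apex it is nIL, while adding any edge $e$ yields a graph with $4n-9$ edges, which by Mader's extremal bound (a $K_6$ minor free graph on $n$ vertices has at most $4n-10$ edges) has a $K_6$ minor and is therefore intrinsically linked. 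Thus no edge can be added while preserving nIL, so $G$ is maxnil.

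I do not expect a genuine obstacle, since the structure is identical to Theorem~\ref{thm:twoapex}; the only point requiring care is checking that the three linking ingredients line up exactly with their knotting counterparts. Concretely I must confirm (i) apex graphs are linklessly embeddable, (ii) a maximal apex graph on $n\ge 6$ vertices has $4n-10$ edges, and (iii) $4n-9$ edges force a $K_6$ minor. These are the linking analogs of the facts already used: apex plays the role of $2$-apex, the Mader bound for $K_6$ (giving $4n-10$, one below the $K_6$-minor threshold) plays the role of the $K_7$ bound $5n-15$, and $K_6$ (the first intrinsically linked complete graph, by Conway--Gordon) plays the role of $K_7$. Because each shift decreases the Mader index by exactly one, the numerology matches and the proof goes through unchanged.
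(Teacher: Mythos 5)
Your proof is correct and is exactly the adaptation the paper has in mind: the paper omits the argument, saying only that it follows Theorem~\ref{thm:twoapex} with "essentially the same proof," and you have carried out that substitution correctly (apex $\Rightarrow$ nIL, the count $|E|=4|V|-10$ for maximal apex graphs, and Mader's bound forcing a $K_6$ minor at $4|V|-9$ edges). Nothing further is needed.
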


\begin{thm} 
\label{thm:ord8}
For $|V| = n \leq 6$, $K_n$ is the only maxnik graph.
The only maxnik graphs for $n = 7$ and $8$ are the three $2$-apex graphs derived
from triangulations on five and six vertices.
\end{thm}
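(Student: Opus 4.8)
The plan is to split into the ranges $n\le 6$, $n=7$, and $n=8$, reducing each to an enumeration of triangulations via Theorems~\ref{thm:mind} and~\ref{thm:twoapex}. For $n\le 6$ I would first note that $K_n$ is $2$-apex (deleting any two vertices leaves at most four vertices, which is planar), hence nIK by \cite{BBFFHL,OT}, and it is edge maximal since it is complete; so $K_n$ is maxnik. Conversely, any $G\ne K_n$ on these vertices is a proper subgraph of $K_n$, and adding a missing edge keeps $G+e$ a subgraph of the nIK graph $K_n$, hence nIK, so $G$ is not maximal. For $n=7$, Theorem~\ref{thm:mind} forces $|E|=5\cdot7-15=20$, so $G=K_7-e$ for a single edge $e$. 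This $G$ is $2$-apex (delete two vertices avoiding $e$ to obtain the planar graph $K_5-e$), hence nIK, and its only edge completion is $K_7$, which is IK \cite{Md,CMOPRW}; thus $K_7-e$ is maxnik, it is unique up to isomorphism by the edge-transitivity of $K_7$, and it is exactly the maximal $2$-apex graph built from the unique triangulation $K_5-e$ on five vertices.

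For $n=8$, Theorem~\ref{thm:mind} gives $20\le|E|\le25$. My main reduction is Theorem~\ref{thm:twoapex}: a maxnik $2$-apex graph must be maximal $2$-apex, and a maximal $2$-apex graph on eight vertices has $|E|=25$ and arises from a triangulation on six vertices by adjoining two adjacent apexes joined to all six vertices. Since there are exactly two triangulations on six vertices, the octahedron and the stacked triangulation, whose complements in $K_6$ are $3K_2$ and $P_4$, this produces exactly the two candidates $K_8-3K_2$ and $K_8-P_4$. I would then argue nothing else survives. When $|E|=25$, write $G=K_8-M$ with $|M|=3$: deleting two vertices meeting no edge of $M$ leaves $K_6-M$, a $12$-edge graph that is planar if and only if it is a triangulation, while deleting a vertex incident to $M$ leaves more than $12$ edges on six vertices and is never planar. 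Hence $G$ is $2$-apex precisely when $K_6-M$ is a triangulation, i.e. for $M\in\{3K_2,P_4\}$. For the remaining configurations $M\in\{K_3,\,P_3\cup K_2,\,K_{1,3}\}$ the graph $G$ is IK: when $M=K_{1,3}$ the seven non-central vertices span $K_7$, and when $M=K_3$ or $M=P_3\cup K_2$ one can place $M$ inside two disjoint triangles, so $G$ contains the intrinsically knotted graph $K_{3,3,1,1}$.

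The remaining and hardest step is to eliminate maxnik graphs on eight vertices with $|E|\le24$. The clean route is to establish that every nIK graph on eight vertices is $2$-apex; granting this, any such graph with fewer than $25$ edges is $2$-apex but not maximal $2$-apex, hence not maxnik by Theorem~\ref{thm:twoapex}. I expect this ``nIK $\Rightarrow$ $2$-apex in order eight'' statement to be the crux, since it fails for graphs in general, and proving it on eight vertices will likely require either the classification of the order-eight intrinsically knotted graphs or a computer check that each non-$2$-apex graph on eight vertices is IK. With this in hand, the surviving maxnik graphs are precisely the three $2$-apex graphs obtained from the triangulations on five and six vertices, completing the count.
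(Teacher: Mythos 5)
Your proposal is correct and, at its crux, rests on the same pillar as the paper's proof: the fact that every nIK graph of order at most eight is $2$-apex, which you rightly single out as the hard step for eliminating eight-vertex candidates with $|E|\le 24$. That statement is not something you need to reprove or verify by computer --- it is Proposition~1.4 of \cite{M}, and the paper's entire proof consists of citing it and then reading off the maximal $2$-apex graphs from the unique triangulation on five vertices and the two on six. Where you differ is that you handle $n\le 7$ and the $25$-edge case by hand: the forced edge count $|E|=20$ from Theorem~\ref{thm:mind} at $n=7$, and the case analysis of $K_8-M$ over the five three-edge graphs $M$ (with $K_7$ and $K_{3,3,1,1}$ subgraphs killing the non-triangulation complements), are all sound, but they become redundant once the cited proposition is in hand, since it reduces every case directly to Theorem~\ref{thm:twoapex} and the enumeration of small triangulations. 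So the only ``gap'' in your write-up is a missing citation rather than a missing idea, and your more elementary treatment of the small cases is a reasonable (if longer) substitute for part of what the citation provides.
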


\begin{proof}
In \cite[Proposition 1.4]{M} it's shown that every nIK graph of order 8 or less is $2$-apex. So, the maxnik graphs are the 
maximal $2$-apex graphs. For $n \leq 6$, all graphs are $2$-apex, so $K_n$ is the only maximal knotless graph. For $n = 7$, the 
maximal $2$-apex graph is $K_7^-$, formed
by adding two vertices to the unique graph with a planar triangulation on five vertices, $K_5^-$.
The two maximal planar graphs on 8 vertices are 
formed by adding two vertices to the two triangulations on 
six vertices, the octahedron and a graph whose complement is a $3$-path. We
will call these graphs $K_8 - $3 disjoint edges and $K_8 - P_3$.
\end{proof}

Let $E_9$ (called $N_9$ in \cite{HNTY}) be the nIK nine vertex graph in the Heawood family.
Figure~\ref{fig:E9} in Section~\ref{sec:bounds} below shows a
knotless~\cite{M} embedding of $E_9$.

\begin{thm} The graph $E_9$ is maxnik.
\label{thm:E9}
\end{thm}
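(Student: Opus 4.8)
$E_9$ is already nIK: the embedding in Figure~\ref{fig:E9} is knotless \cite{M}. The whole content of the statement is therefore maximality, namely that $E_9+e$ is intrinsically knotted for each of the $\binom{9}{2}-21=15$ non-edges $e$ of $E_9$. Unlike the graphs of Theorem~\ref{thm:ord8}, $E_9=N_9$ is not $2$-apex \cite{HNTY}, so Theorem~\ref{thm:twoapex} is unavailable and we must certify intrinsic knotting of each $E_9+e$ by hand. The robust tool is that the nIK property is minor closed, so $E_9+e$ is IK as soon as it contains an IK minor. The catch, and what makes this more than a routine check, is that the usual small certificates are ruled out on sight: $E_9+e$ has $9$ vertices, $22$ edges, and minimum degree at least $2$ (Theorem~\ref{thm:mind}), and a one-line branch-set count then shows that such a graph has no IK minor on seven or fewer vertices (recall an IK graph needs at least $21$ edges \cite{JKM}, while such a minor has at most $20$). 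In particular $E_9+e$ has no $K_7$ minor and, by the same count, no $K_{3,3,1,1}$ minor.

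So the certifying minor must itself be large and sparse. The branch-set count pins down the possibilities exactly: an IK minor of $E_9+e$ is either $E_9+e$ itself, or a graph on eight or nine vertices with precisely $21$ edges. Since the $21$-edge IK graphs are known---they are the intrinsically knotted members of the Heawood family \cite{HNTY}---the plan is, for each non-edge $e$, to exhibit one of these inside $E_9+e$. First I would compute $\mathrm{Aut}(E_9)$ and cut the $15$ non-edges down to its orbits, which I expect to be few. For each orbit representative $e$ I would then look either for an edge $f$ of $E_9$ with $f\neq e$ such that $(E_9+e)-f$ is isomorphic to an IK member of the family, or for a pair of vertices whose contraction in $E_9+e$ produces one; recording the isomorphism (equivalently the branch sets) certifies that $E_9+e$ is IK, hence that $E_9$ is maxnik.

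The main obstacle is exactly that the edge count strips away every easy certificate: there is no uniform apex or $K_7$-minor argument to fall back on, and each orbit must be matched individually against the $21$-edge IK list. Two subtleties demand care. The Heawood family consists of intrinsically linked graphs, only some of which are IK, so a careless contraction may land on one of its nIK members---among them $E_9$ itself---rather than on a genuinely knotted graph; each certificate must therefore be checked to be IK by exhibiting the explicit isomorphism, not inferred from mere membership in the family. And the deleted edge $f$ must be kept distinct from $e$, since deleting $e$ simply returns the nIK graph $E_9$. As a consistency check, Theorem~\ref{thm:mind} places each $E_9+e$, with its $22$ edges on $9$ vertices, comfortably inside the maxnik range $20\le|E|\le5|V|-15$, confirming that size alone cannot decide the question and that the minor certificates are genuinely needed.
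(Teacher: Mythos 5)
Your reduction is sound as far as it goes, and for one of the two orbits of non-edges it matches the paper exactly: there the paper certifies $E_9+e$ by finding the Heawood-family graph $F_9$ (a $21$-edge graph shown to be MMIK by Kohara--Suzuki) as a subgraph, which is precisely the kind of $21$-edge certificate you propose to hunt for. The gap is in the other orbit, and your own edge count already flags it: an IK minor of $E_9+e$ must be either a $21$-edge graph on eight or nine vertices or $E_9+e$ itself, and for one of the two edge types the second alternative is the one that actually occurs --- the paper cites \cite{GMN}, where that particular $22$-edge graph is shown to be minor minimal IK. For that orbit there is no proper IK minor at all, so your plan of matching single edge deletions and contractions of $E_9+e$ against the list of $21$-edge IK graphs must come up empty, and you are left needing to prove directly that a specific $22$-edge, $9$-vertex graph is intrinsically knotted. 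That is a genuinely hard step (one must show every embedding contains a knotted cycle, rather than exhibit a minor), and the proposal contains no mechanism for it; the paper closes it only by appeal to the prior result of \cite{GMN}.

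Two smaller points. The classification of $21$-edge IK graphs as exactly the IK members of the Heawood family is a real theorem, but it is not what \cite{HNTY} proves (that paper determines which Heawood-family graphs are IK); for your purposes you do not need the full classification anyway, only that the IK Heawood members are available as candidate certificates. Also, you invoke Theorem~\ref{thm:mind} to get $\delta(E_9+e)\ge 2$, which is not literally applicable since $E_9+e$ is not known to be maxnik; but $\delta(E_9)\ge 4$ can be read off the graph, so your branch-set count survives unchanged.
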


\begin{proof}
That $E_9$ is nIK is established in~\cite{M}.
Up to symmetry, there are two types of edges that may be added.
One type yields the graph $E_9+e$, shown to be IK (in fact minor minimal IK or MMIK) in \cite{GMN}. The other possible addition
yields a graph that has as a subgraph $F_9$ in the Heawood family. Kohara and Suzuki~\cite{KS} established that $F_9$ is MMIK.
\end{proof}

\begin{figure}[htb]
\begin{center}
\includegraphics[scale=0.5]{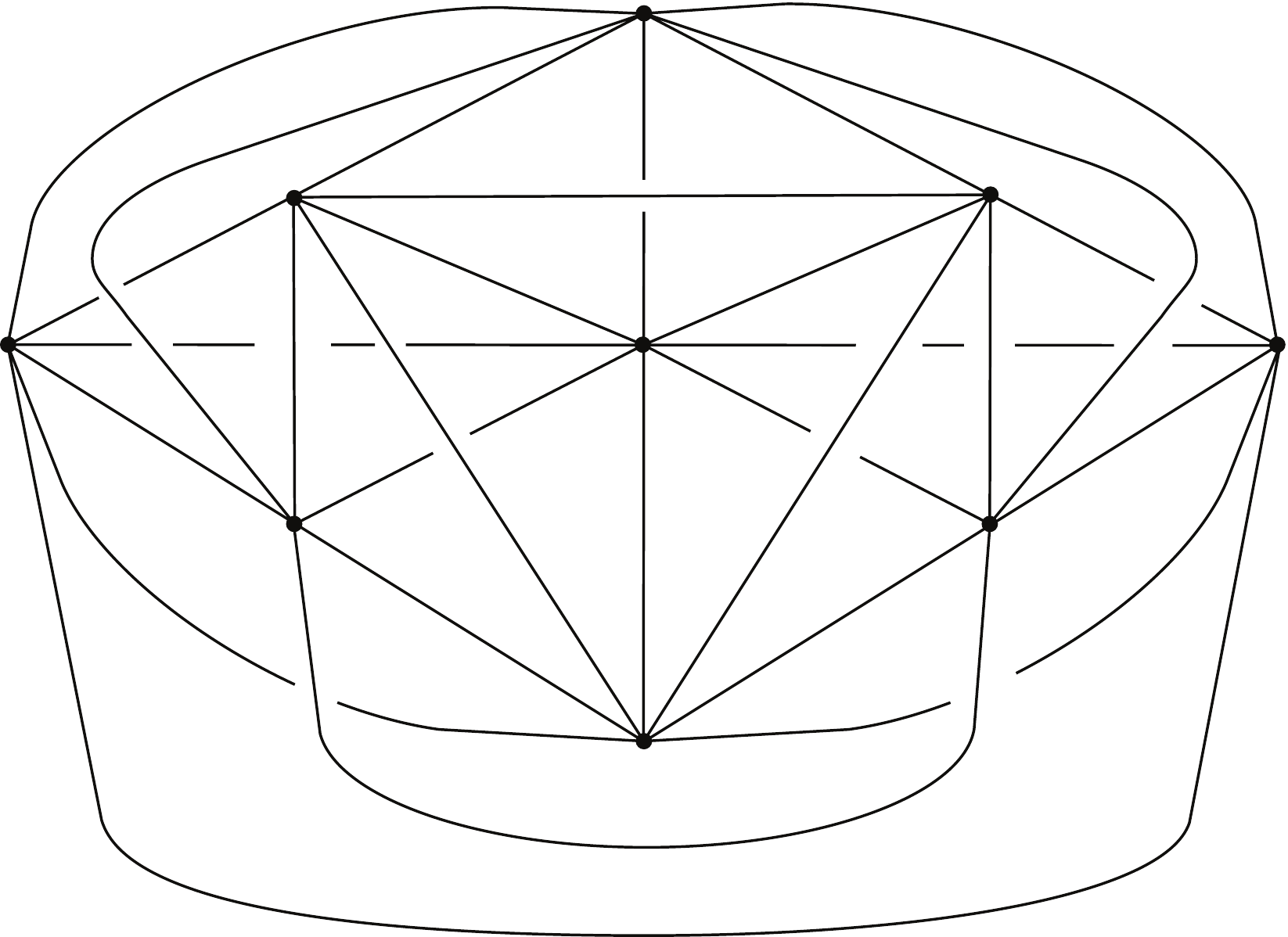}
\end{center}
\caption{A knotless embedding of $G_{9,29}$. 
\label{fig:G929}%
}
\end{figure}

\begin{thm}
There are seven maxnik graphs of order nine.
\end{thm}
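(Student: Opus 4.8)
The plan is to sort the order-nine maxnik graphs into those that are $2$-apex and those that are not, and to enumerate each class separately. By Theorem~\ref{thm:mind} any maxnik graph $G$ with $|V| = 9$ has $20 \le |E| \le 30$, so the search lives in a bounded range of sizes. I would also lean on the fact from \cite{M} (used already in Theorem~\ref{thm:ord8}) that every nIK graph on at most eight vertices is $2$-apex: since a maxnik $G$ is nIK, each vertex-deleted subgraph $G - v$ is an nIK graph on eight vertices, hence $2$-apex. So a non-$2$-apex maxnik graph on nine vertices is one that is itself not $2$-apex even though all of its one-vertex deletions are.

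For the $2$-apex graphs, Theorem~\ref{thm:twoapex} reduces the problem to counting maximal $2$-apex graphs on nine vertices, each of which has $|E| = 30$. I would show such a graph arises from a triangulation $T$ on seven vertices by adjoining two apex vertices, each adjacent to all eight other vertices: this gives $3(7) - 6 + 2\cdot 7 + 1 = 30$ edges, and maximality forces the base to be a full triangulation with both apices universal. Counting then reduces to counting triangulations of the sphere on seven vertices, of which there are five, and one checks that these yield five pairwise non-isomorphic nine-vertex graphs. This accounts for five of the seven maxnik graphs.

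For the non-$2$-apex graphs I would first exhibit the two claimed examples and confirm they are maxnik. The graph $E_9$ is maxnik by Theorem~\ref{thm:E9}. For $G_{9,29}$ I would read off nIK from the embedding in Figure~\ref{fig:G929} and verify maximality by checking that every edge addition creates a graph containing a known minor-minimal IK graph, such as a member of the Heawood family \cite{KS,GMN}; note that $G_{9,29}$ has only $29$ edges, so it is not maximal $2$-apex and is therefore genuinely non-$2$-apex. The essential step is completeness: proving there are no further non-$2$-apex maxnik graphs on nine vertices. Here I would use the structural constraint above, namely that every candidate is a nine-vertex nIK graph, not $2$-apex, all of whose vertex deletions are $2$-apex, which sharply restricts the possibilities and renders an exhaustive, computer-assisted search over $20 \le |E| \le 30$ tractable. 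For each surviving candidate one certifies nIK by an explicit knotless embedding and certifies maximality by producing an IK minor for every added edge.

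The main obstacle is precisely this completeness argument for the non-$2$-apex case. Because the nIK property, unlike planarity or $2$-apexness, admits no clean edge-count characterization, non-$2$-apex maxnik graphs cannot be excluded by counting alone; one must organize a genuine search and supply certificates in both directions—knotless embeddings to prove a graph is nIK, and IK subgraphs or minors drawn from the classified obstruction sets to prove maximality. Controlling the size of this search and ensuring the catalog of IK obstructions suffices to settle maximality for every candidate and every added edge is the delicate heart of the proof.
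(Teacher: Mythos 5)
Your decomposition into $2$-apex and non-$2$-apex cases matches the paper's, and your handling of the $2$-apex side (Theorem~\ref{thm:twoapex} plus the five triangulations on seven vertices joined to two apex vertices) is essentially what the paper does. But the completeness argument for the non-$2$-apex case --- which you yourself identify as ``the delicate heart of the proof'' --- is left as a gap rather than filled. The constraint you propose, that every vertex-deleted subgraph of a nine-vertex nIK graph is $2$-apex, holds automatically for \emph{every} nine-vertex nIK graph by \cite[Proposition 1.4]{M}, so it does not meaningfully restrict the candidates; and an ``exhaustive search over $20 \le |E| \le 30$'' founders on exactly the problem you name: there is no general procedure to certify that a given graph is nIK, so you cannot simply enumerate and test.

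The missing idea is the classification of minor minimal not $2$-apex (MMN2A) graphs through order nine in \cite{MP}. Any non-$2$-apex graph of order nine contains one of these finitely many graphs as a minor, and all but a few of them are also MMIK; a maxnik graph cannot contain an MMIK minor, so it must contain one of only \emph{three} MMN2A-but-not-MMIK graphs: $E_9$, $G_{26}$, or $G_{27}$. This is what collapses the search. One then checks that every order-nine graph containing $G_{26}$ or $G_{27}$ is either a subgraph of $G_{9,29}$ or else IK (certified by an MMIK minor from the $K_7$ or $K_{3,3,1,1}$ families or the graph $G_{9,28}$ of \cite{GMN}), and handles $|E| \le 21$ separately via \cite[Propositions 1.6 and 1.7]{M}, which say such order-nine graphs are IK, equal to $E_9$, or $2$-apex. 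Without invoking the MMN2A classification (or some equivalent obstruction-set argument), your search has no termination criterion for the nIK side and the completeness claim does not go through.
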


\begin{proof}
The seven graphs are the five maximal $2$-apex graphs with 30 edges, $E_9$,  and the graph $G_{9,29}$, 
shown in Figure~\ref{fig:G929}. Note that $G_{9,29}$ is the complement of $K_1 \sqcup K_2 \sqcup C_6$.
Theorems~\ref{thm:twoapex} and \ref{thm:E9} show that six of these seven graphs are maxnik. To see that 
$G_{9,29}$ is as well, note that the embedding shown in Figure~\ref{fig:G929}, due to Ramin Naimi~\cite{N}, is knotless. 
Up to symmetry, there are two ways to add an edge to the graph. In either case, the new graph has a $K_7$ minor and is IK.

It remains to argue that no other graphs of order nine are maxnik. We know that order nine graphs with size 21 or less are
either IK, the graph $E_9$, or else $2$-apex, see \cite[Propositions 1.6 and 1.7]{M}. Using Theorem~\ref{thm:twoapex}, 
this completes the argument for graphs with $|E| \leq 21$. Suppose $G$ is maxnik of order nine with $|E| \geq 22$.
By Theorem~\ref{thm:mind}, we can assume $|E| \leq 30$.  If $G$ is $2$-apex, by Theorem~\ref{thm:twoapex}, it is one of the 
five maximal $2$-apex graphs. So, we can assume $G$ is not $2$-apex. The minor minimal not $2$-apex (MMN2A)
graphs through order nine
are classified in~\cite{MP}. With a few exceptions these graphs are also MMIK. If $G$ has an IK minor (including a MMIK minor)
it is IK and not maxnik. So, we can assume $G$ has as a minor a graph that is MMN2A, but not MMIK. 
There are three such graphs. One is $E_9$, the other two, $G_{26}$ and $G_{27}$, have 26 and 27 edges. 
In Theorem~\ref{thm:E9}, we showed that $E_9$ is maxnik. The other two are subgraphs of $G_{9,29}$. To complete 
the proof, we observe that any order nine graph that contains $G_{26}$ is either a subgraph of $G_{9,29}$ or else IK and similarly 
for $G_{27}$. In fact, for those that are IK, we can verify this by finding a MMIK minor, either in the $K_7$ or $K_{3,3,1,1}$ family,
or  else the graph $G_{9,28}$ described in~\cite{GMN}.
\end{proof}

\begin{thm} The only maxinik graph of size 20 is $K_7^-$. There are seven maxnik graphs with at most 20 edges.
\end{thm}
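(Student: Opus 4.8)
The plan is to assemble the classifications already obtained for order at most nine and then rule out larger orders. For $|V|\le 6$, Theorem~\ref{thm:ord8} gives the maxnik graphs $K_1,\dots,K_6$, of sizes $0,1,3,6,10,15$; for $|V|\ge 7$, Theorem~\ref{thm:mind} forces $|E|\ge 20$, so a maxnik graph of size at most $20$ and order at least seven must have size exactly $20$. On seven vertices the unique maxnik graph is $K_7^-$, of size $5\cdot 7-15=20$, while the order-eight and order-nine classifications already established produce maxnik graphs only of size $25$ (order eight) and of sizes $21,29,30$ (order nine), none equal to $20$. Hence the seven graphs $K_1,\dots,K_6,K_7^-$ are exactly the maxnik graphs of size at most $20$, and $K_7^-$ the only one of size $20$, provided I can show that no maxnik graph of size $20$ has $|V|\ge 10$. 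This is the one remaining point.

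The key observation I would use is that if $G$ is maxnik with $|E|=20$, then for every non-edge $e$ the graph $G+e$ is IK with exactly $21$ edges, and is therefore itself minor minimal IK. Indeed, $G+e$ contains an MMIK minor $H$; since IK graphs have at least $21$ edges~\cite{JKM,M} we have $|E(H)|\ge 21$, whereas any edge deletion or contraction strictly lowers the edge count, so the only minor operations preserving $21$ edges are deletions of isolated vertices. As $\delta(G)\ge 2$ by Theorem~\ref{thm:mind}, the graph $G+e$ has no isolated vertices, whence $G+e=H$ is MMIK. Two consequences follow. Every MMIK graph has minimum degree at least $3$, so $\delta(G+e)\ge 3$ for all non-edges $e$; a degree-two vertex $w$ of $G$ would then have to meet every non-edge, forcing $G-w=K_{|V|-1}$ and $|E|=\binom{|V|-1}{2}+2$, which never equals $20$ for $|V|\ge 7$. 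Thus $\delta(G)\ge 3$, and $40=2|E|\ge 3|V|$ gives $|V|\le 13$.

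It then remains to eliminate $10\le |V|\le 13$. Here I would invoke the classification of the $21$-edge IK graphs: these are exactly $K_7$ and the intrinsically knotted members of the Heawood family~\cite{HNTY,GMN}, a finite list of known orders. For each such candidate $H$ with $10\le |V(H)|\le 13$, any maxnik $G$ with $G+e=H$ satisfies $G=H-e$ and must have $\delta(G)\ge 3$, so $e$ must join two vertices of degree at least $4$ in $H$; checking the finitely many candidates verifies that no admissible edge deletion yields a maxnik graph (for instance the cubic order-fourteen member is excluded outright, since deleting any edge creates degree-two vertices). I expect this last finite verification to be the main obstacle: the reduction to $21$-edge MMIK graphs via the edge-count argument is clean, but confirming that none of the Heawood-family graphs of order at least ten gives a maximal knotless graph after an edge deletion relies on the explicit structure of that family and a careful, case-by-case degree and embedding analysis rather than a single uniform estimate.
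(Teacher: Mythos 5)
Your handling of orders at most nine matches the paper, and your reduction to $|V|\le 13$ (every $G+e$ is a $21$-edge graph with no smaller IK minor, hence MMIK, hence $\delta(G+e)\ge 3$, hence $\delta(G)\ge 3$) is correct and rather elegant. But the argument has a genuine gap at exactly the point you flag: the elimination of orders $10$ through $13$ is not carried out, only described as a finite verification you ``expect'' to succeed. Moreover, the classification you lean on --- that the $21$-edge IK graphs are precisely $K_7$ and the intrinsically knotted members of the Heawood family --- is a real theorem (due to Lee, Kim, Lee, and Oh), but it is a substantial piece of work in its own right and does not follow from \cite{HNTY} or \cite{GMN} as cited; \cite{HNTY} only determines which Heawood-family graphs are IK, and neither reference rules out $21$-edge IK graphs outside that family. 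Even granting the classification, you would still need to verify, for each KS graph $H$ of order $10$ to $13$ and each edge $e$ of $H$ joining two vertices of degree at least $4$, that $H-e$ fails to be maxnik (for instance, that some other non-edge $e'$ makes $(H-e)+e'$ non-isomorphic to any $21$-edge IK graph of that order). As written, the proof is incomplete.

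The paper avoids all of this with a two-line argument: by \cite[Theorem 2.1]{M}, every graph with at most $20$ edges is $2$-apex, so a maxnik graph $G$ of size $20$ is $2$-apex and hence, by Theorem~\ref{thm:twoapex}, maximal $2$-apex; for $|V|\ge 7$ this forces $|E|=5|V|-15$, which equals $20$ only when $|V|=7$, and is at least $35$ when $|V|\ge 10$. If you want to salvage your route, the cleanest fix is to import the Lee--Kim--Lee--Oh classification explicitly and then actually perform the degree and isomorphism checks for the order $10$ to $13$ KS graphs; but note that this trades a one-line appeal to \cite{M} for a heavier external theorem plus a nontrivial case analysis.
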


\begin{proof}
Work above establishes this through order nine. The seven maxnik graphs with at most 20 edges are the seven on seven 
or fewer vertices. Suppose $G$ of order ten or more and size 20 is maxinik. By 
\cite[Theorem 2.1]{M}, $G$ is $2$-apex and therefore maximal $2$-apex. But this means $|E| = 5 |V| - 15 \geq 35$, a contradiction.
\end{proof}

\begin{rem} A computer search suggests that $E_9$ is the only maxnik graph of size 21. The search makes use of the 92 
known MMIK graphs of size 22, see~\cite{FMMNN}.
\end{rem}

\section{Clique sums of maxnik graphs}
\label{sec:cliq}

Clique sums of maxnil graphs were studied in \cite{NPP}, and we will show similar, if weaker, versions in the case of maxnik graphs. These results are used in Section \ref{sec:bounds}.

\begin{lem}
\label{lem:NPP7}
For $t \leq 2$, the clique sum over $K_t$ of nIK graphs is nIK.
\end{lem}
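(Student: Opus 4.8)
The plan is to mimic the embedding construction from the proof of Theorem~\ref{thm:mind}, placing knotless embeddings of the two summands on opposite sides of a separating plane and then tracking which cycles can cross between them. Write the clique sum as $G = G_1 \cup_{K_t} G_2$ with $G_1$ and $G_2$ both nIK, and treat the cases $t = 0, 1, 2$ in turn.

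For $t = 0$ the sum is a disjoint union: embed $G_1$ and $G_2$ knotlessly inside two disjoint balls. Every cycle lies in a single summand and is therefore trivial. For $t = 1$, let $v$ be the shared vertex. Following Theorem~\ref{thm:mind}, embed $G_1 \setminus v$ and $G_2 \setminus v$ knotlessly and disjointly, place $v$ on a plane separating them, and route the edges from $v$ to each $G_i$ on the corresponding side. Because $v$ is a cut vertex, no cycle can meet both $G_1 \setminus v$ and $G_2 \setminus v$ (it would have to pass through $v$ twice), so again every cycle is contained in one summand and is trivial.

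The case $t = 2$ is the substantive one. Let $uv$ be the shared edge (present in both $G_1$ and $G_2$). Embed $G_1 \setminus \{u,v\}$ and $G_2 \setminus \{u,v\}$ knotlessly on opposite sides of a plane $\Pi$, and isotope so that $u$, $v$, and the edge $uv$ lie in $\Pi$, exactly as the triangle is flattened into the separating plane in the proof of Theorem~\ref{thm:mind}. A cycle $C$ that stays on one side is trivial by hypothesis, so assume $C$ uses vertices from both $G_1 \setminus \{u,v\}$ and $G_2 \setminus \{u,v\}$. Since $\{u,v\}$ separates the two sides, $C$ must pass through both $u$ and $v$; thus $C = P_1 \cup P_2$, where $P_i$ is a $u$--$v$ path lying, apart from its endpoints, strictly on the $G_i$ side of $\Pi$. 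Then $\Pi$ meets $C$ transversally in exactly the two points $u$ and $v$, and capping each side off along the edge $uv \subset \Pi$ exhibits $C$ as the connected sum $C_1 \# C_2$, where $C_1 = P_1 \cup uv$ is a cycle of $G_1$ and $C_2 = P_2 \cup uv$ is a cycle of $G_2$. As $G_1$ and $G_2$ are embedded knotlessly, $C_1$ and $C_2$ are unknots, and hence so is $C$.

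The main obstacle is making the $t = 2$ decomposition rigorous: one must justify that the summands can be isotoped so that the shared edge lies flat in $\Pi$ while each $G_i \setminus \{u,v\}$ stays on its own side (the construction of Theorem~\ref{thm:mind} does precisely this), and then invoke the standard fact that a $2$-sphere meeting a knot in two points realizes it as a connected sum. Everything else is bookkeeping about which cycles can cross the separating plane.
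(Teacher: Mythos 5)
Your proposal is correct and follows essentially the same route as the paper: knotless embeddings of the two summands are placed on opposite sides of a separating plane with the shared clique flattened into the plane, and any cycle meeting both sides is recognized as a connected sum of a cycle from each summand, hence unknotted. Your treatment of the $t=2$ case is if anything slightly more careful than the paper's (which describes the identification via ``shrinking'' the shared edge), and your inclusion of $t=0$ is a harmless addition.
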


\begin{proof}

Let $G_1$ and $G_2$ be nIK graphs, and let $\Gamma(G)$ denote the set of all cycles in $G$.  Let $G$ be the clique sum of $G_i$ over a clique of size $t$.  Let $f_i$ be an embedding of $G_i$ that contains no non-trivial knot.

Suppose $t=1$.  We may extend the $f_i$ to an embedding of $G$ by embedding $f_1(G_1)$ in 3-space with $z > 0$, and $f_2(G_2)$ with $z<0$.  $G = G_1 \cup_v G_2$, so by isotoping vertex $v$ from each $G_i$ to the plane $z=0$ and identifying them there, we have an embedding $f(G)$.  A closed cycle in $G$ must be contained in a single $G_i$, and hence given $c \in \Gamma(G)$, then $c \in \Gamma(G_i)$ for some $i$. As the embeddings $f_i(G_i)$ contain no nontrivial knot, $c$ must be the unknot, and hence $G$ is nIK.

Suppose $t=2$. We may extend the $f_i$ to an embedding of $G$ by embedding $f_1(G_1)$ in 3-space with $z > 0$, and $f_2(G_2)$ with $z<0$.  $G = G_1 \cup_e G_2$, so by shrinking the edge $e$ in each $G_i$ and then isotoping them to the plane $z=0$ and identifying them there, we have an embedding $f(G)$.  A closed cycle $c \in \Gamma(G)$ must either be an element of $\Gamma(G_i)$, or $c = c_1 \# c_2$, with $c_i \in \Gamma(G_i)$.  As the embeddings $f_i(G_i)$ contain no nontrivial knot, in the first case $c$ is the unknot, and in the second, it is the connect sum of unknots and hence unknotted.   Thus, $G$ is nIK.
\end{proof}

For $H_1, H_2, \ldots, H_k$ subgraphs of graph $G$, let $\langle H_1, H_2, \ldots, H_k \rangle_G$ denote the subgraph induced by the vertices of 
the subgraphs.

\begin{lem}
\label{lem:NPP9}%
Let $G$ be a maxnik graph with a vertex cut set $S = \{x,y\}$, and let $G_1, G_2, \ldots, G_r$ denote the connected components of 
$G \setminus S$. Then $xy \in E(G)$ and $\langle G_i, S \rangle_G$ is maxnik for all $1 \leq i \leq r$.
\end{lem}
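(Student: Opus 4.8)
The plan is to exploit the clique-sum structure imposed by the cut set $S$, using Lemma~\ref{lem:NPP7} to assemble knotless embeddings together with the well-known fact that the nIK property is closed under taking minors. Write $H_i = \langle G_i, S \rangle_G$ for the subgraph induced on $V(G_i) \cup \{x,y\}$. Since no edge of $G$ joins distinct components $G_i, G_j$, every edge of $G$ lies in some $H_i$, so $G = \bigcup_{i=1}^r H_i$ where the $H_i$ pairwise meet exactly in $S$. By Theorem~\ref{thm:mind}, $G$ is $2$-connected, and a short argument shows each of $x$ and $y$ has a neighbor in every $G_i$ (otherwise the remaining vertex of $S$ alone would disconnect $G$). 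In particular each $H_i$ contains a path from $x$ to $y$ running through $G_i$.

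For the first claim, suppose toward a contradiction that $xy \notin E(G)$. I would show each $H_i + xy$ is nIK by realizing it as a minor of $G$: fix $j \neq i$ (possible since $r \geq 2$), take a path from $x$ to $y$ inside $H_j$, delete every vertex outside $V(H_i)$ except the internal vertices of this path, and contract that path into $y$. The only new adjacency created is $x \sim y$, so the resulting minor is exactly $H_i + xy$. Because $G$ is nIK and nIK is minor-closed, each $H_i + xy$ is nIK. Now $G + xy = \bigcup_i (H_i + xy)$ is an iterated clique sum over the edge $xy$, so repeated application of Lemma~\ref{lem:NPP7} shows $G + xy$ is nIK, contradicting the maximality of $G$. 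Hence $xy \in E(G)$, and consequently each $H_i$ already contains the edge $xy$, so $G$ is a genuine iterated clique sum of the $H_i$ over $K_2$.

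For the second claim, each $H_i$ is nIK as a subgraph of the nIK graph $G$, so it remains to prove maximality. Let $f$ be any non-edge of $H_i$; since $H_i$ is an induced subgraph, $f$ is also a non-edge of $G$, so $G + f$ is IK by the maximality of $G$. Writing $G + f = (H_i + f) \cup \bigcup_{j \neq i} H_j$ as an iterated clique sum over the edge $xy$, I would argue by contraposition using Lemma~\ref{lem:NPP7}: if $H_i + f$ were nIK, then every summand would be nIK and the clique sum $G + f$ would be nIK as well, contradicting that $G + f$ is IK. Therefore $H_i + f$ is IK for every added edge $f$, so $H_i$ is maxnik.

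The main obstacle is the first claim, and specifically the minor construction showing $H_i + xy$ is nIK: one must verify that contracting an $x$--$y$ path lying in a \emph{different} component produces precisely the edge $xy$ on the vertex set of $H_i$ without introducing any other unwanted adjacencies, which is where $2$-connectivity (guaranteeing that such a path exists) and the disjointness of the components are both essential. Once $xy \in E(G)$ is secured, the clique-sum bookkeeping and repeated application of Lemma~\ref{lem:NPP7} render the remaining arguments routine.
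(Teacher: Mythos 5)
Your proposal is correct and follows essentially the same route as the paper: the edge $xy$ is obtained by realizing each $\langle G_i,S\rangle_G + xy$ as a minor of $G$ via a path through another component and then reassembling with Lemma~\ref{lem:NPP7}. The only (immaterial) difference is in the maximality half, where you argue per non-edge $f$ by contraposition on the clique-sum decomposition of $G+f$, whereas the paper saturates each piece to a maxnik graph $H_i$ and concludes $G = H_1 \cup_{xy} \cdots \cup_{xy} H_r$; both hinge on the same application of Lemma~\ref{lem:NPP7}.
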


\begin{proof}
As $G$ is 2-connected by Theorem \ref{thm:mind}, each of $x$ and $y$ has at least one neighbor in each $G_i$.  Suppose $xy \notin G$.  Form $G' = G + xy$ and let $G'_i = \langle G_i, S \rangle_{G'}$. For each $i$, edge $xy \in G'_i$. But $G'_i$ is a minor of $G$, as there exists $G_j$ with $i \neq j$ since $S$ is separating, and there exists a path from $x$ to $y$ in $G_j$ as $G_j$ is connected.   Thus in $\langle G_i, G_j, S \rangle_G$, we may contract $G_j$ to $x$ to obtain a graph isomorphic to $G'_i$.  Thus, $G'_i$ is nIK.  So, by Lemma \ref{lem:NPP7}, $G' = G'_1 \cup_{xy} G'_2 \cup_{xy} \ldots \cup_{xy} G'_r$ is nIK.  This contradicts the fact that $G$ is maxnik, and hence $xy \in E(G)$.  

Suppose that one or more of the $G_i$ are not maxnik.   Then add edges as needed to each $G_i$ to form graphs $H_i$ that are maxnik.   Then the graph $H = H_1 \cup_{xy} H_2 \cup_{xy} \ldots \cup_{xy} H_r$ is nIK by Lemma \ref{lem:NPP7} and contains $G$ as a subgraph.  As $G$ is maxnik, $G=H$ and hence $G_i = H_i$ for all $i$, so every $G_i$ is maxnik as well. 
\end{proof}

\begin{lem}
\label{lem:NPP10}%
Let $G_1$ and $G_2$ be maxnik graphs. Pick an edge in each $G_i$ and label it $e$. Then $G = G_1 \cup_{e} G_2$ is maxnik if $e$ is non-triangular in at least one $G_i$.
\end{lem}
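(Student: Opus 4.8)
The plan is to check the two defining properties of a maxnik graph separately: that $G$ is nIK, and that $G+f$ is IK for every edge $f$ that can be added. The first is immediate. Since $G_1$ and $G_2$ are maxnik they are in particular nIK, and $G = G_1 \cup_e G_2$ is a clique sum over $K_2$, so Lemma~\ref{lem:NPP7} gives that $G$ is nIK. All the real work is in maximality. To set up notation I would write $V(G_1) = A \cup \{x,y\}$ and $V(G_2) = B \cup \{x,y\}$, where $e = xy$ is the identified edge and $A$, $B$ are the two ``interiors,'' and I would assume without loss of generality that $e$ is non-triangular in $G_1$.

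For maximality I would split into three cases according to where the endpoints of the added edge $f$ lie. If both endpoints lie in $V(G_1)$, then $G_1 + f$ is a subgraph of $G + f$; since $G_1$ is maxnik, $G_1 + f$ is IK, and because intrinsic knotting is inherited by any graph containing an IK minor, $G + f$ is IK. The case in which both endpoints lie in $V(G_2)$ is identical. The remaining, essential case is a cross edge $f = ab$ with $a \in A$ and $b \in B$.

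For the cross edge the idea is to exhibit, as a minor of $G + ab$, the graph $G_1$ together with a genuinely new edge at $a$. Let $B_b$ be the connected component of $G_2 \setminus \{x,y\}$ containing $b$. Here I would invoke that $G_2$ is $2$-connected (Theorem~\ref{thm:mind}): this forces $B_b$ to have a neighbor of both $x$ and $y$, since otherwise $x$ or $y$ would be a cut vertex of $G_2$. Now delete the vertices $B \setminus B_b$ and contract the set $B_b \cup \{a\}$, which is connected in $G + ab$ via the edge $ab$, to a single vertex identified with $a$. The result is a minor of $G + ab$ supported on $V(G_1)$, namely $G_1$ with the edges $ax$ and $ay$ adjoined wherever they were absent. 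The crucial point is that at least one of these is honestly new: since $e = xy$ is non-triangular in $G_1$, no vertex of $G_1$ — in particular not $a$ — is adjacent to both $x$ and $y$, so at least one of $ax, ay \notin E(G_1)$. Taking $ax$ new, $G_1 + ax$ is IK because $G_1$ is maxnik, and it is a subgraph of our minor, so $G + ab$ has an IK minor and is IK.

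The main obstacle is precisely this cross-edge case, and within it the single delicate point is guaranteeing that the contraction produces an honestly new edge rather than merely reproducing $G_1$; this is the only place the non-triangularity hypothesis is used, and it is exactly what would fail on the symmetric $G_2$-side argument when $b$ happens to be adjacent to both $x$ and $y$. Beyond that, I would confirm two routine points: that the three cases exhaust all possible added edges (any new edge splits its endpoints among $A$, $B$, and $\{x,y\}$, and only the $A$-to-$B$ split is not already covered), and that intrinsic knotting is genuinely minor-monotone, so that producing an IK minor is enough.
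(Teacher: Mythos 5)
Your proposal is correct and follows essentially the same route as the paper: nIK via Lemma~\ref{lem:NPP7}, the same three-way case split on the endpoints of the added edge, and in the cross-edge case a contraction of the $G_2$-side to manufacture a new edge $ax$ (or $ay$) at $a$, with non-triangularity guaranteeing the edge is genuinely absent from $G_1$. The only cosmetic difference is the direction of the contraction (you contract the component of $b$ into $a$, making $a$ adjacent to both $x$ and $y$; the paper contracts the interior of $G_2$ into $x$, turning $ab$ into $ax$), and your explicit appeal to $2$-connectivity is in fact a careful justification of a step the paper passes over quickly.
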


\begin{proof}
Suppose that $e$ is non-triangular in $G_1$ and has endpoints $x,y$.  Add an edge $ab$ to the graph $G$.  The graph $G$ is nIK by Lemma \ref{lem:NPP7}. If both $a, b \in G_i$ for some $i$, then $G + ab$ is IK, as the $G_i$ are each maxnik.  Thus, we may assume that $a \in G_1$ and $b \in G_2$.  The edge $e$ is non-triangular in $G_1$, so vertex $a$ is not adjacent to both endpoints of $e$.   We may assume that $a$ is not adjacent to $x$.  As $G_2$ is connected, we construct a minor $G'$ of $G +ab$ by contracting the whole of $G_2$ to vertex $x$.  Note that as $b \in G_2$, we have the edge $ax$ in $G'$, and in fact $G' = G_1 +ax$.   As $G_1$ is maxnik, $G'$ is IK and so is $G + ab$.   Thus, $G$ is maxnik.
\end{proof}

\begin{lem}%
\label{lem:New}%
For $i = 1,2$, let $G_i$ be maxnik, containing a $3$-cycle $C_i$, and admitting to a knotless embedding such that $C_i$ bounds a disk whose interior
is disjoint from the graph. Then the clique sum $G$ over $K_3$ formed by identifying $C_1$ and $C_2$ is nIK. Moreover, $G$ is maxnik if $C_i$ is not 
part of a $K_4$ in at least one $G_i$.
\end{lem}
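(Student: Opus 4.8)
The plan is to prove the two assertions separately, closely following the templates of Lemma~\ref{lem:NPP7} and Lemma~\ref{lem:NPP10}. For the nIK assertion I would first build an explicit knotless embedding of $G$ by gluing. Using the hypothesis for each $G_i$, embed $G_1$ knotlessly so that $C_1$ bounds a disk $D_1$ with graph-free interior, and isotope $D_1$ into the plane $z=0$ with the rest of $G_1$ in $z>0$; symmetrically place $G_2$ in $z<0$ with $C_2$ bounding a coplanar disk $D_2$. Identifying $C_1$ with $C_2$ (and $D_1$ with $D_2$) realizes the clique sum $G$, with the common triangle $C$ bounding a disk $D$ whose interior misses $G$. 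The disk hypothesis is genuinely needed here, since, unlike the $t\le 2$ cases of Lemma~\ref{lem:NPP7}, a $K_3$ clique sum of nIK graphs need not be nIK. I would then show every cycle is unknotted: a cycle lying in a single $G_i$ is unknotted because $f_i$ is knotless, while a cycle $c$ meeting both sides must pass through the cut set $C$ in two or three vertices. In either case the triangle edges lying on $\partial D$ let me split $c$ along $D$ into a cycle of $G_1$ and a cycle of $G_2$, exhibiting $c$ as a connected sum of two unknots. Concretely, perturbing the plane into a sphere meeting $c$ in exactly two points (the two vertices where $c$ crosses transversally, with any third visited triangle vertex pushed to one side) yields the connected sum decomposition; the graph-free interior of $D$ is what guarantees the splitting sphere and the closing arcs avoid the rest of $G$.

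For the maxnik assertion, assume that $C$ is not part of a $K_4$ in $G_1$, and add an arbitrary new edge $ab$ to the (already nIK) graph $G$; I must show $G+ab$ is IK. If $a,b$ lie in a common $G_i$, then $G_i+ab$ is IK by maximality of $G_i$, hence so is its supergraph $G+ab$. So assume $a\in G_1\setminus C$ and $b\in G_2\setminus C$ (the case in which one endpoint lies on $C$ falls into the previous case, since then the added edge is internal to one $G_i$). Because $C$ is not in a $K_4$ in $G_1$, vertex $a$ is non-adjacent to some triangle vertex $p$; write $C=\{p,q,r\}$. The goal is to realize $G_1+ap$ as a minor of $G+ab$: contract a connected subgraph of $G_2$ that contains $b$ and $p$ but avoids $q,r$ down to the single vertex $p$, and delete the remaining private vertices of $G_2$. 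Since every private vertex of $G_2$ meets $G_1$ only along $C$, this leaves $G_1$ intact, creates no new edge inside $G_1$, and turns $ab$ into $ap$, so the minor is exactly $G_1+ap$. As $G_1$ is maxnik and $ap\notin E(G_1)$, this minor is IK, and therefore so is $G+ab$.

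The main obstacle is the contraction step, which requires $b$ and the chosen $p$ to lie in a common component of $G_2\setminus\{q,r\}$; since $G_2$ is only guaranteed $2$-connected (Theorem~\ref{thm:mind}), deleting the two vertices $q,r$ may disconnect it. I would control this with $2$-connectivity: the component $B$ of $b$ in $G_2\setminus C$ must attach to at least two triangle vertices, for otherwise a single triangle vertex would be a cut vertex of $G_2$; hence $b$ can be routed, avoiding the other two triangle vertices, to at least two of $p,q,r$. Pairing this with the fact that $a$ misses at least one triangle vertex, the two conditions reconcile whenever $a$ misses at least two triangle vertices or $B$ attaches to all three, and the only potentially awkward configuration is the degenerate one in which $a$ is adjacent to exactly two triangle vertices and $B$ attaches to exactly those same two. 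In that case I would either run the symmetric argument inside $G_2$, routing $a$ through $G_1$ to produce $G_2+(\text{new edge})$ via $b\not\sim p$, or observe that the remaining triangle vertex then has both its neighbors confined to the other two, so that suppressing that low-degree vertex reduces the configuration to an edge sum already covered by Lemma~\ref{lem:NPP10}. Making this last reconciliation airtight is the crux of the argument.
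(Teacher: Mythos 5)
Your overall strategy coincides with the paper's: the same plane-gluing construction and connected-sum decomposition of cycles for the nIK half, and the same device of contracting part of $G_2$ onto a triangle vertex $x$ not adjacent to $a$ so as to realize $G_1+ax$ as a minor of $G+ab$ for the maxnik half. The nIK half is fine, and is in fact argued more carefully than the paper's one-line version. For the maxnik half, the connectivity worry you raise is genuine, and you should know that the paper does not engage with it: its proof simply invokes connectedness of $G_2$ to get a path from $b$ to $x$ and contracts, without checking that such a path can be chosen to avoid the other two triangle vertices. On that point you are being more careful than the source.

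However, your patch does not close the argument. Write $C=\{p,q,r\}$ with $p$ the triangle vertex not adjacent to $a$, and suppose the component $B$ of $b$ in $G_2\setminus C$ attaches only to $\{q,r\}$; then $a$ is adjacent to both $q$ and $r$, and $b$ is not adjacent to $p$. Your first escape (the symmetric argument producing $G_2+bp$) needs a path from $a$ to $p$ in $G_1$ avoiding $q$ and $r$, and this can fail for exactly the same reason: the component $A$ of $a$ in $G_1\setminus C$ may itself attach only to $\{q,r\}$ (Lemma~\ref{lem:NPP9} permits such $2$-cuts). Your second escape rests on a false premise: $p$ need not have low degree, since $G_i\setminus C$ may have other components attached to $p$. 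One can dispose of the further subcase where $b$ misses $q$ or $r$ by contracting the edge $aq$ or $ar$ to realize $G_2+qb$ or $G_2+rb$, but the residual configuration --- $A$ and $B$ both attaching only to $\{q,r\}$, with $a$ and $b$ each adjacent to both $q$ and $r$ --- defeats every minor-taking route of this type: any connected set containing $b$ that is contracted onto a single vertex of $G_1$ without disturbing $G_1$ must land on $q$ or $r$, both of which are already neighbors of $a$. So your proof is incomplete in this case (as, unacknowledged, is the paper's); note that in the paper's actual application of the lemma, summing $E_9$ with $K_4$, the private vertex of $K_4$ is adjacent to all three triangle vertices, so the problem does not arise there.
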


\begin{proof} 
Let $f_i$ be the knotless embedding of $G_i$.  Embed the $f_i(G_i)$ so that they are separated by a plane.   We may then extend this to an embedding $f(G)$ by isotoping the $C_i$ to the separating plane and identifying them there.  

 Let $\Gamma(G)$ denote the set of all cycles in $G$. As the cycles $C_i$ bound a disk in $f(G)$, if a closed cycle $c \in \Gamma(G)$ is not contained in one of the $f_i(G_i)$, then  $c = c_1 \# c_2$, with $c_i \in \Gamma(G_i)$.  As the embeddings $f_i(G_i)$ contain no nontrivial knot, in the first case $c$ is the unknot, and in the second, it is the connect sum of unknots and hence unknotted.   Thus, $G$ is nIK.
 
 Suppose $C_1$ is not contained in a 4-clique  in $G_1$. We will show $G + ab$ is IK, and hence $G$ is maxnik.  As the $G_i$ are maxnik, we may assume that $a \in G_1$ and $b \in G_2$, as otherwise $G + ab$ is IK.  As $C_1$ is not contained in a 4-clique in $G_1$, there exists a vertex $x$ in $C_1$ that is not adjacent to $a$.  As $G_2$ is connected, there is a path from $b$ to $x$.  Contract $G_2$ to $a$.  This graph contains $G_1 + ax$ as a minor, and hence is IK, as $G_1$ is maxnik and does not contain edge $ax$.   Thus, $G$ is maxnik.  
 
\end{proof}

\section{Bounds on maximal knotless graphs}
\label{sec:bounds}

We now consider maximal knotless graphs in general and establish bounds on the possible number of edges, and the maximal and minimal degrees.   We first show a lemma that will be useful for establishing a lower bound. A similar result holds for maximal linkless graphs as well.

\begin{lem}
Suppose $G$ is maxnik and contains a vertex $v$ of degree 3. Then all neighbors of $v$ are adjacent to each other. 
\label{lem:deg3_nghs}
\end{lem}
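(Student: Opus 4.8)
The plan is to prove this by contradiction, following the same maximality strategy used throughout the paper. Suppose $v$ has degree $3$ with neighbors $a$, $b$, $c$, and suppose that not all three are pairwise adjacent; without loss of generality say $ab \notin E(G)$. The goal is to exhibit a knotless embedding of $G + ab$, contradicting that $G$ is maxnik. The intuition is that a degree-$3$ vertex is ``topologically flexible'': in any embedding the three edges $va$, $vb$, $vc$ emanate from $v$, and locally near $v$ one can always find a disk spanning the angle between $va$ and $vb$ that avoids the rest of the graph, so adding $ab$ along the boundary of such a disk should not create any new knotting.

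More concretely, I would start from a knotless embedding $f$ of $G$, guaranteed to exist since $G$ is nIK. First I would isotope a small neighborhood of $v$ so that the three edge-germs of $va$, $vb$, $vc$ at $v$ are controlled, and then route the new edge $ab$ as an arc that starts by closely paralleling $va$ backwards from $a$ to (near) $v$, then parallels $vb$ forward from (near) $v$ to $b$, staying in a thin tube around the path $a, v, b$ and avoiding all other edges. The key structural observation is that the triangle $avb$ formed by $va$, $vb$, and the new edge $ab$ bounds a disk $D$ whose interior is disjoint from the rest of the embedded graph (this is the analogue of the disk arguments in Theorem~\ref{thm:mind} and Lemma~\ref{lem:New}). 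Then I would argue exactly as in those proofs: any cycle of $G + ab$ that uses the new edge $ab$ can, because $ab$ is homotopic rel endpoints to the path $a,v,b$ across the disk $D$, be replaced by a cycle of $G$ of the same knot type (pushing $ab$ across $D$ to the two old edges), so it remains unknotted. Cycles not using $ab$ already lie in the knotless embedding $f(G)$. Hence $G + ab$ is nIK, the desired contradiction.

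The main obstacle, and the step requiring the most care, is justifying that the new edge $ab$ can be routed so that the triangle $avb$ genuinely bounds an embedded disk disjoint from the rest of the graph, rather than merely arguing informally that ``$ab$ follows $va$ then $vb$.'' The subtlety is that $c$ (and the edge $vc$) sits at $v$ as well, and one must ensure the thin tube carrying $ab$ can be pushed to one side of $vc$; because $v$ has only the three edges $va, vb, vc$, there is a well-defined local cyclic order of these three germs around $v$, and the arc $ab$ can be slid off $v$ into the wedge between $va$ and $vb$ that does not contain $vc$, keeping it disjoint from $vc$ near $v$ and from everything else by tube-thinness. I would make this precise by taking a small ball $B$ about $v$ meeting the graph only in the three initial segments of $va, vb, vc$, constructing $ab$ inside (a neighborhood of) the union of $va$, $vb$ and an arc in $\partial B$ joining their endpoints on the correct side, and then verifying the spanning disk $D$ lies in the tube and so is interior-disjoint from $f(G)$.

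Once the embedded spanning disk $D$ is in hand, the knot-theoretic conclusion is routine and mirrors the connected-sum/disk reasoning already deployed in the proof of Theorem~\ref{thm:mind} and in Lemma~\ref{lem:New}, so I do not expect difficulty there. A final remark: this argument shows that a degree-$3$ vertex in a maxnik graph must have a complete neighborhood, i.e. $v$ together with $a,b,c$ spans a $K_4$; since this uses only the local structure at $v$ and the existence of a knotless embedding, the same proof indeed carries over verbatim to the maximal linkless setting by replacing ``unknot'' with ``split link'' throughout, as the paper notes.
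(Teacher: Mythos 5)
Your proposal is correct and follows essentially the same strategy as the paper: realize the triangle at the degree-$3$ vertex $v$ as the boundary of a disk whose interior misses the graph, then isotope any cycle through the new edge across that disk to a cycle of the original embedding. The only (cosmetic) difference is that you add the single missing edge directly to a knotless embedding of $G$ and contradict maximality, whereas the paper first deletes all edges among $N(v)$, re-embeds, adds all three triangle edges with spanning disks, and concludes the resulting graph must equal $G$.
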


\begin{proof}

Label the neighbors of $v$ as $x_1,x_2,x_3$. Let $E_v = \{x_1x_2, x_1x_3, x_2x_3 \}$ and $E = E(G)$.
Delete the edges in $E\cap E_v$ to form $G_Y = G \setminus (E \cap E_v)$.  Then add back all the edges of $E_v$ to form $G' = G_Y + E_v$.  We will show $G = G'$. 

As $G$ is maxnik, $G_Y$ has an embedding $f$ with no nontrivial knot.  We may extend $f$ to an embedding of $G'$ by embedding each edge $x_ix_j$ so that the 3-cycle $x_ivx_j$ bounds a disk.

Let $\Gamma(G)$ denote the set of all cycles in the graph $G$. Suppose $c$ is a cycle in $\Gamma(G')$.   If $c$ does not contain one or more edges $x_ix_j$, then $c \in \Gamma(G_Y)$, and hence is a trivial cycle in $f(G')$.   Suppose that $c$ does contain one or more edges $x_ix_j$.   There are three possibilities: 
$c$ is a $3$-cycle  $x_i v x_j$ and bounds a disk; $c$ includes a path of the form $x_i, x_j, v, x_k$ with $\{i,j,k \} = \{1,2,3\}$; or $c$ does not include the vertex $v$.
In the first case $c$ is trivial as it bounds a disk. If 
$c$ does not contain $v$, then, since the cycles $x_ivx_j$ bound disks, $c$ is isotopic to $c' \in \Gamma(G_Y)$ and hence trivial.
Similarly, if $c$ includes a path $x_i, x_j, v, x_k$, using the disk $x_i v x_j$, we can isotope the path to $x_i, v, x_k$ to make $c$ isotopic
to $c' \in \Gamma(G_Y)$ and hence trivial. 

Thus, $G'$ has an embedding with no non-trivial knot.   As $G$ is maxnik, $G$ cannot be a proper subgraph of $G'$, and hence $G=G'$.
\end{proof}

\begin{thm} 
\label{thm:2vbound}%
If $G$ is maxnik with  $|V| \geq 5$, then $|E| \geq \frac74|V|$.
\end{thm}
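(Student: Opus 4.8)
The plan is to recast the edge bound as a statement about average degree and prove it by discharging. Since $2|E| = \sum_{v} \deg(v)$, the inequality $|E| \geq \frac{7}{4}|V|$ is equivalent to the average degree being at least $\frac{7}{2}$, that is, to $\sum_{v}\bigl(\deg(v) - \frac{7}{2}\bigr) \geq 0$. I would assign each vertex the initial charge $\deg(v) - \frac{7}{2}$, redistribute charge without changing the total, and argue that every vertex can be left with nonnegative charge. A useful preliminary observation is that a degree-$2$ vertex, despite its negative charge $-\frac{3}{2}$, is actually \emph{favorable} for the bound: attaching one to an existing edge changes $2|E| - \frac{7}{2}|V|$ by $2\cdot 2 - \frac{7}{2} = +\frac{1}{2}$. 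Thus the true difficulty lies with vertices of degree $3$, which carry charge $-\frac{1}{2}$ and sit inside closed $K_4$'s.

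The structural input comes from maximality. By Theorem~\ref{thm:mind} we have $\delta(G) \geq 2$. For a degree-$2$ vertex $v$ with neighbors $a,b$, I first show $ab \in E(G)$: otherwise, routing a new edge $ab$ closely alongside the path $a,v,b$ in a knotless embedding of $G$ gives a knotless embedding of $G+ab$ (any cycle through $ab$ is isotopic to one through $a,v,b$), contradicting maximality. So each degree-$2$ vertex lies in a triangle on a \emph{base edge} $ab$. Using the clique-sum machinery I then record the sharpenings that two degree-$2$ vertices are neither adjacent nor share both neighbors: if $v,w$ were both adjacent to $a,b$, then $\{a,b\}$ is a $2$-cut and, by Lemmas~\ref{lem:NPP7} and \ref{lem:NPP9}, adding $vw$ completes a $K_4$ glued along $ab$ while keeping $G$ nIK, a contradiction. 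Hence the degree-$2$ vertices form an independent set with distinct base edges, and each of their neighbors has degree at least $3$. For degree-$3$ vertices I invoke Lemma~\ref{lem:deg3_nghs}: the three neighbors are mutually adjacent, so $v$ lies in a $K_4$.

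With these facts I would use the rules that each degree-$2$ vertex draws $\frac{3}{4}$ from each of its two neighbors and each degree-$3$ vertex draws $\frac{1}{6}$ from each of its three $K_4$-neighbors, leaving every degree-$2$ and degree-$3$ vertex at charge exactly $0$. Because the edge from a low-degree vertex $v$ to a neighbor $u$ contributes $+1$ to the charge of $u$, each such donation leaves $u$ strictly better off locally, so a vertex is only threatened when it is itself a degree-$3$ vertex drawing from, and being drawn upon by, other degree-$3$ vertices inside a common $K_4$. The main obstacle is precisely this clustering of $K_4$'s: a single $K_4$ has $2|E|-\frac{7}{2}|V| = 12-14 = -2 < 0$, so it violates the target ratio, and only the hypothesis $|V|\geq 5$ rules it out as the whole graph. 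I would control the accumulated deficit of chains of $K_4$'s by passing to the $2$-cut decomposition of Lemma~\ref{lem:NPP9}, reducing to the $3$-connected case (where $\delta \geq 3$ forces a cleaner local structure) with base cases $K_5$ and $K_6$, both of which satisfy the bound; verifying that the edge-gluing in this decomposition never drops the ratio below $\frac{7}{4}$ except through the excluded pieces $K_3$ and $K_4$ is the crux of the argument.
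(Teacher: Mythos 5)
Your structural inputs are the same as the paper's (the forced triangle over a degree-$2$ vertex, and Lemma~\ref{lem:deg3_nghs} placing every degree-$3$ vertex inside a $K_4$), but the quantitative heart of the argument is missing, and the discharging rules you do state are demonstrably insufficient. A degree-$4$ vertex adjacent to a single degree-$2$ vertex starts with charge $+\frac12$ and ends with $\frac12-\frac34=-\frac14$; nothing in your write-up excludes this configuration (you only exclude degree-$3$ neighbors of degree-$2$ vertices, implicitly, via Lemma~\ref{lem:deg3_nghs}). Likewise, two adjacent degree-$3$ vertices sharing their other two neighbors --- a $K_4$ hung on a $2$-cut --- each end with $-\frac12+3\cdot\frac16-\frac16=-\frac16$. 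So either these local configurations must be ruled out by further structural lemmas, or the rules must be redesigned; you defer exactly this step as ``the crux of the argument,'' and the heuristic that a donation ``leaves $u$ strictly better off locally'' is not a charge computation. The proposed retreat to the $2$-cut decomposition of Lemma~\ref{lem:NPP9} does not rescue this: the $3$-connected pieces still have minimum degree $3$, so the degree-$3$ problem you have not solved simply reappears inside each piece.

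For comparison, the paper sidesteps both difficulties. It takes a counterexample $H$ minimizing $|V|$ and shows a degree-$2$ vertex $v$ can be deleted outright: $H\setminus v$ is again maxnik (re-insert $v$ so the triangle $vwx$ bounds a disk to see that any nIK augmentation of $H\setminus v$ lifts to one of $H$), and $|E|-2<\frac74\bigl(|V|-1\bigr)$, contradicting minimality --- so no bookkeeping around degree-$2$ vertices is ever needed. In the remaining case $\delta(H)=3$, instead of local discharging it performs a global count on the subgraph induced by the degree-$3$ vertices $A$ and their higher-degree neighbors $B$: the $n$ vertices of $A$ contribute $3n$, their edges into $B$ contribute at least $2n$, connectedness of the component forces at least $m-1$ edges among the $m$ vertices of $B$, and $2$-connectivity of $H$ (Theorem~\ref{thm:mind}) supplies two more edge-ends, giving average degree at least $\frac{5n+2m}{n+m}\geq\frac72$ since $m\leq n$. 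If you want to keep the discharging framework, you must first prove the exclusions it silently assumes (for instance that every neighbor of a degree-$2$ vertex has degree at least $5$, and that deficient $K_4$ clusters of degree-$3$ vertices cannot occur); as written, the proof has a genuine gap.
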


\begin{proof} 
By Theorem~\ref{thm:ord8}, $K_5$ is the only maxnik graph with order five and it satisfies the conclusion of the theorem.

Suppose $H$ has the least number of vertices among counterexamples to the theorem. We will consider a vertex $v$ of minimal degree in $H$.  If $\deg(v) \geq 4$, then $H$ has $|E| \geq 2|V|$ and hence is not a counterexample, so $\deg(v) \leq 3$. 
By Theorem~\ref{thm:mind}, $\deg(v) \geq 2$, so we need only consider $v$ of degree 2 or 3.

Suppose $\deg(v) = 2$.  We will argue that $H' = H \setminus v$ is also maxnik with $|E'| < \frac74 |V'|$, contradicting our assumption that 
$H$ was a minimal counterexample. Let $N(v) = \{w,x\}$ and note that $wx \in E(H)$. Otherwise, in an unknotted
embedding of $H$, we could add the edge $wx$ so that the $3$-cycle $vwx$ bounds a disk. This will not introduce 
a knot into the embedding and contradicts the maximality of $H$.

As a subgraph of $H$, $H'$ is nIK. Suppose it is not maxnik because there is an edge $ab$ so that $H'+ab$ 
remains nIK. In a knotless embedding of $H'+ab$, we can add the vertex $v$ and its two
edges so the $3$-cycle $vwx$ bounds a disk. This will not introduce a knot into the embedding and shows
that $H+ab$ is also nIK, contradicting the maximality of $H$. Thus, no such graph $H$ with a vertex of degree 2 can exist.

So, we may assume that $\deg(v)=3$.  Here we cannot apply the techniques of \cite{A}, as $Y \nabla$ moves do not preserve intrinsic knotting \cite{FN}.   However, Lemma \ref{lem:deg3_nghs} allows us to show the average degree of $H$ is actually at least 3.5, and hence $H$ is not a counterexample.

Divide the vertices of $H$ into 3 sets:  $A = \{$vertices of degree 3$\}$, $B=\{$vertices of degree $>3$ that are neighbors of vertices in $A\}$, and $C = \{$all other vertices of $H \}$.  Form the graph $H' = H \setminus C$.   All vertices in $C$ have degree 4 or greater, so it suffices to show that the vertices in each connected component of $H'$ have average degree 3.5 or higher. 

A vertex $a_{i1}$ of degree 3 has three neighbors, label them $b_{i1}, b_{i2}, a_{i2}$, where $a_{i2}$ is a neighbor of minimal degree.  If $\deg(a_{i2})=3$, we continue.  If not, delete all edges incident on $a_{i2}$ except those between $a_{i2}$ and $\{a_{i1}, b_{i1}, b_{i2}\}$.  This creates a subgraph of $H'$ with strictly fewer edges; we will abuse notation and continue to call it $H'$.  Vertex $a_{i2}$ now has degree 3 in $H'$, and we move it to set $A$.   

If $a_{i2}$ had degree greater than 3 in $H$, then, since it has the minimal degree among the neighbors of $a_{i1}$, $\deg(b_{ij}) \geq 4$ and $b_{i1}, b_{i2} \in B$.
If $\deg(a_{i2})=3$ in $H$, vertices $a_{ij}$ are adjacent only to each other and the $b_{ij}$.  If either of the $b_{ij}$ have degree 3 in $H$, then $H$ can be disconnected by deleting the other $b_{ij}$.  This is a contradiction as $H$ is maxnik and must be 2-connected by Theorem \ref{thm:mind}.  Thus, the $b_{ij}$ are in $B$.

Consider the connected component of $v$ in  $H'$, call it $H'_1$.  We will calculate the total degree of the vertices in $H'_1$ and divide by the number of vertices. Suppose there are $n$ vertices from set $A$ and $m$ vertices from set $B$ in $H'_1$ for a total of $n+m$ vertices.  Each vertex from set $A$ has degree 3, so the contribution to total degree from set $A$ is $3n$.  Each vertex in $A$ is adjacent to exactly 2 of the $b_{ij}$, so the total degree contribution for set $B$ is at least $2n$ from edges to set $A$.  Further, $H'_1$ is connected.  As $a_{ij}$ is only adjacent to $b_{i'j'}$ if $i=i'$, there must be at least $m-1$ edges between the $b_{ij}$, which adds $2(m-1)$ to the total degree.  This gives an average degree of $\frac{5n+2m-2}{n+m}$ in $H'_1$. However, $H$ is 2-connected by Theorem \ref{thm:mind}, so there must be at least 2 edges from $H'_1$ to its complement in $H$. So within $H$, these vertices must have average degree greater than or equal to $\frac{5n+2m}{n+m}$.  Note that $ 2 \leq m \leq n$, and $\frac{5n+2m}{n+m}$ attains its minimum at $m = n$. The minimum is $\frac72$, and hence $H$ must have $|E| \geq \frac74 |V|$.            
\end{proof}

\begin{thm}
\label{thm:NPP5}%
There exist maxnik graphs with  $|E| < \frac52|V|$ edges for arbitrarily large $|V|$.
\end{thm}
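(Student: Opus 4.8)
The plan is to build the family by edge-summing (Lemma~\ref{lem:NPP10}) copies of two maxnik ``gadgets'': copies of $E_9$ (maxnik by Theorem~\ref{thm:E9}), which let the graph grow, and copies of $K_3$ (maxnik by Theorem~\ref{thm:ord8}), each of which attaches a degree-$2$ vertex along a non-triangular edge. Chaining copies of $E_9$ alone would give edge-to-vertex ratio tending to $20/7 > 5/2$, so the role of the $K_3$'s is to introduce cheap degree-$2$ vertices that drag the ratio below $5/2$; this is also how we arrange $\delta(G) = 2$, as promised after Theorem~\ref{thm:mind}.

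First I would pin down the non-triangular edges of $E_9$. Presenting $E_9$ as the result of two disjoint $\nabla Y$ moves on $K_7$, the two new vertices each have degree $3$, and I would check that each of their six incident edges is non-triangular: the three neighbors of a new vertex are pairwise non-adjacent, since the $\nabla Y$ move deleted the triangle they had spanned in $K_7$. This exhibits six non-triangular edges of $E_9$.

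Next I would chain $k$ copies of $E_9$ by iterated edge-sums along these non-triangular edges. Each sum is maxnik by Lemma~\ref{lem:NPP10}, and I would verify the two bookkeeping facts that make the construction run: summing along an edge that is non-triangular in \emph{both} summands leaves the identified edge non-triangular (a common neighbor of its endpoints would have to lie entirely in one summand), and every non-triangular edge away from the gluing also survives (the only new vertices are attached solely to the gluing edge). Thus the chain $C_k$ is maxnik with $|V| = 7k+2$, $|E| = 20k+1$, and $6k-(k-1) = 5k+1$ non-triangular edges. Then, for each non-triangular edge of $C_k$, I would edge-sum a $K_3$ onto it. Since the edge is non-triangular on the $C_k$ side, Lemma~\ref{lem:NPP10} applies (its hypothesis asks only that the edge be non-triangular in \emph{one} summand), and the sum attaches a new degree-$2$ vertex while making only that one edge triangular, leaving the remaining non-triangular edges available for the next ear. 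Carrying this out on all $5k+1$ edges produces a maxnik graph $G_k$ with $|V| = 12k+3$, $|E| = 30k+3$, and $\delta(G_k) = 2$.

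Finally I would record
\[
\frac{|E|}{|V|}=\frac{30k+3}{12k+3}<\frac{5}{2},
\]
which is equivalent to $60k+6 < 60k+15$ and hence holds for every $k$, while $|V| = 12k+3 \to \infty$. The main obstacle is the non-triangular-edge bookkeeping: one must confirm that $E_9$ supplies these non-triangular edges and that edge-summing preserves non-triangularity, so that all $5k+1$ ears can be attached and counted correctly. The count $6$ is exactly what is required, since it makes the \emph{limiting} ratio equal to $5/2$; the construction beats the bound only through the constant additive slack ($30k+3$ against $30k+7.5$), so every finite member lies strictly below $5/2$ even though the family approaches it.
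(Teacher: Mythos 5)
Your overall construction is exactly the paper's: chain $k$ copies of $E_9$ along non-triangular edges via Lemma~\ref{lem:NPP10}, then hang a $K_3$ on each remaining non-triangular edge; your verification that non-triangularity survives the edge-sums and your counts ($7k+2$ vertices and $20k+1$ edges for the chain, $5k+1$ surviving non-triangular edges, final ratio below $\frac52$) are all correct and agree with the paper up to whether one uses $5k$ or $5k+1$ triangles. The genuine gap is your identification of the six non-triangular edges of $E_9$. The graph obtained from $K_7$ by two disjoint $\nabla Y$ moves is not $E_9$; it is one of the Kohara--Suzuki members of the Heawood family (the $F_9$/$H_9$ type), which are minor minimal intrinsically knotted and hence not even nIK, let alone maxnik. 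You can see that your description must be wrong from the paper's own results: your candidate graph has degree-$3$ vertices whose neighborhoods are independent sets, since the $\nabla Y$ move deletes exactly those triangles, but Lemma~\ref{lem:deg3_nghs} says that in a maxnik graph the three neighbors of any degree-$3$ vertex are pairwise adjacent. In other words, in any maxnik graph every edge incident to a degree-$3$ vertex is triangular, so the six edges you propose could never serve as the non-triangular gluing edges. The actual $E_9$ has minimum degree $4$ and maximum degree $5$ (degree sequence $4^35^6$; compare Table~\ref{tbl:deg}), and its six pairwise symmetric non-triangular edges each join a degree-$4$ vertex to a degree-$5$ vertex, as stated in the paper's proof. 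Since the existence of these six edges is the input that drives the whole construction, this step needs to be re-established from the correct model of $E_9$ (for instance from the embedding in Figure~\ref{fig:E9}); once that is repaired, the rest of your argument goes through and coincides with the paper's.
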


\begin{proof}
Let $e$ be an edge of $E_9$ connecting a degree 4 vertex to one of degree 5. 
Edge $e$ is non-triangular and there are five other edges symmetric to it.
Using Lemma~\ref{lem:NPP10}, take $k$ copies of $E_9$ glued along edge $e$. The resulting graph
has $7k+2$ vertices and $20k+1$ edges.  Gluing on five $K_3$ graphs in each $E_9$ on the other non-triangular edges gives an additional $5k$ 
vertices and $10k$ edges.  So, for each $k \geq 1$, we have a graph $G$ with $n= 12k+2$ vertices and $m = 30k+1$ edges.  
Then $m = 30(n-2)/12 + 1 = \frac52 n - 5$.
\end{proof}

These two theorems suggest the following question. In Table~\ref{tbl:VE} we give the least ratios through order nine.

\begin{que}
What is the minimal number of edges for a maxnik graph of $n$ vertices? 
\end{que}

\begin{table}
\begin{center}
\begin{tabular}{c|ccccccccc}
$|V|$ & 1 & 2 & 3 & 4 & 5 & 6 & 7 & 8 & 9 \\ \hline
min($|E|/|V|$) & 0 & 1/2 & 1 & 3/2 & 2 & 5/2 & 20/7 & 25/8 & 21/9
\end{tabular}
\end{center}
\caption{The least ratios of size to order for maxnik graphs through order nine.
\label{tbl:VE}%
}
\end{table}

For maximal planar graphs, $|E| = 3|V| - 6$. Similarly, maximal $k$-apex graphs have a fixed number of edges depending on $|V|$. In contrast, as with maximal linkless graphs, the number of edges in a maxnik graph can vary. In fact, with the exception of $|E| = 22$, for any $|E| \geq 20$, there exists a maxnik graph of that size.

\begin{figure}[htb]
\begin{center}
\includegraphics[scale=0.5]{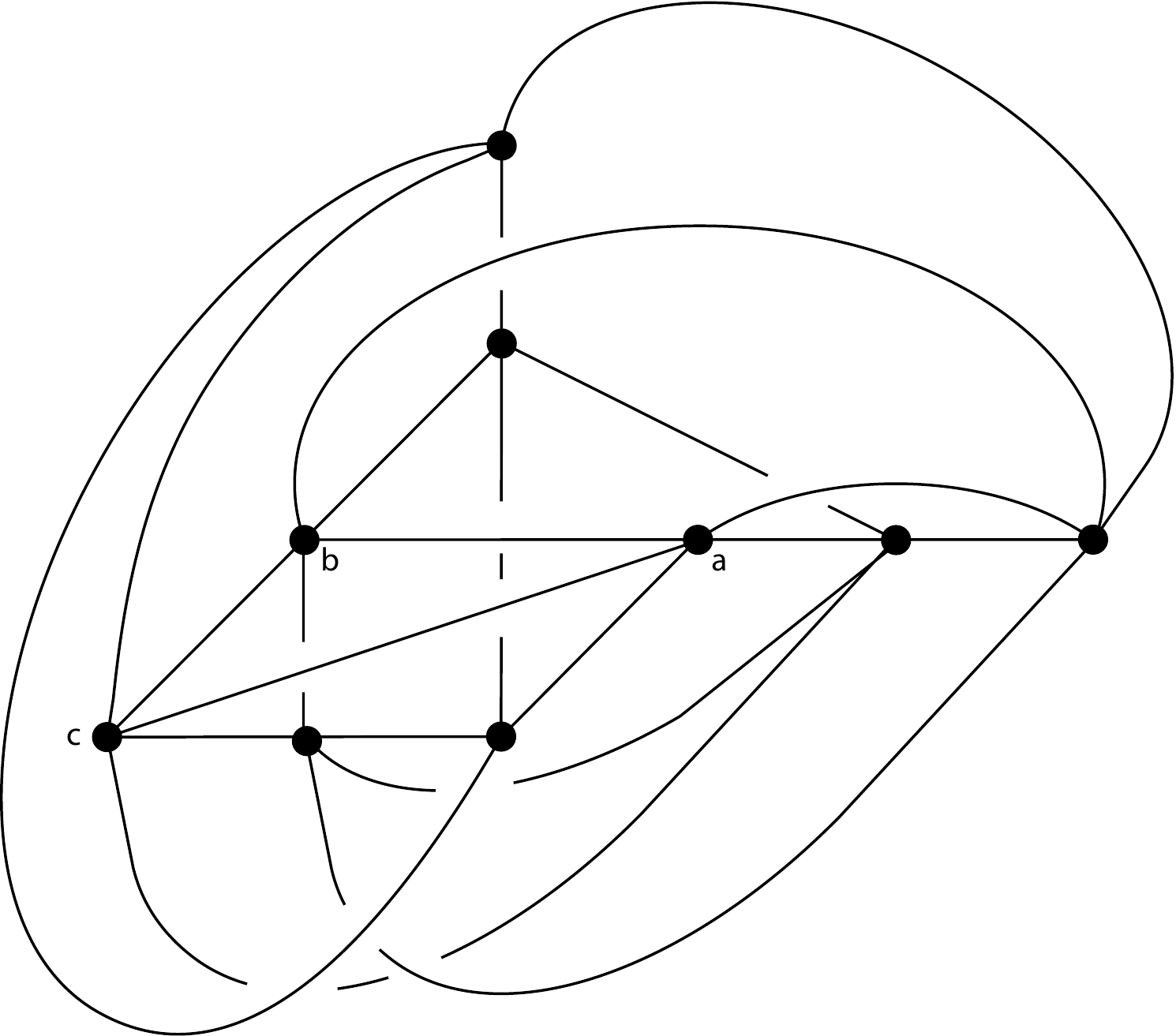}
\end{center}
\caption{A knotless embedding of $E_9$. 
\label{fig:E9}%
}
\end{figure}

\begin{thm}
Let $n \geq 20$ and $n \neq 22$. Then there exists a maxnik graph with $|E|  = n$.
\label{thm:size}
\end{thm}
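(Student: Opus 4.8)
The plan is to realize every target size $n\ge 20$ (except $n=22$) by explicitly constructing a maxnik graph with exactly $n$ edges, using the classification results of Section~\ref{sec:class} as building blocks together with the clique-sum machinery of Section~\ref{sec:cliq}. The strategy is to establish a small base of examples covering a consecutive run of sizes, and then show how to increase the edge count by a fixed increment via clique sums, so that the residue classes combine to cover all sufficiently large $n$.

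\begin{proof}[Proof sketch]
First I would assemble the base cases. From the classification we already have maxnik graphs at several small sizes: $K_7^-$ has $|E|=20$, the graph $E_9$ has $|E|=21$, and the maximal $2$-apex graphs on eight and nine vertices give $|E|=25$ and $|E|=30$ respectively (recall $K_8-3$ disjoint edges, $K_8-P_3$, and the five $30$-edge graphs on nine vertices). The absence of a maxnik graph of size $22$ is consistent with the exclusion in the statement; the Remark notes $E_9$ is likely the only maxnik graph of size $21$, so sizes $22,23,24$ are not available among the classified small graphs and must be produced by other means. I would next exhibit explicit maxnik graphs of sizes $23$ and $24$ (for instance by a small clique sum or a direct verification that some specific graph is maximal $2$-apex of the right size), so that together with $20,21,25$ we have a starting stock.

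The engine for the general construction is Lemma~\ref{lem:NPP10}: if $G_1$ and $G_2$ are maxnik and we glue them along an edge $e$ that is non-triangular in at least one $G_i$, the result $G_1\cup_e G_2$ is again maxnik, and its size is $|E(G_1)|+|E(G_2)|-1$. Choosing one factor to be a fixed small maxnik graph with a known non-triangular edge lets me add a constant amount to the edge count at each step. Concretely, $E_9$ contains a non-triangular edge (as used in the proof of Theorem~\ref{thm:NPP5}), so edge-summing a copy of $E_9$ onto any maxnik graph along a non-triangular edge raises the size by $21-1=20$. Likewise, summing on a triangle $K_3$ along a non-triangular edge (using that $K_3$ is maxnik as $K_3=K_n$ for $n=3$) raises the size by $3-1=2$. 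The two-edge increment from gluing on a triangle is the key flexibility: starting from any maxnik graph possessing a non-triangular edge, I can reach every sufficiently large size of the same parity, and by also having base graphs of both parities I cover all residues.

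The main obstacle, and the step I would be most careful with, is bookkeeping the availability of non-triangular edges so that the clique-sum hypothesis of Lemma~\ref{lem:NPP10} is met at every stage, together with verifying the small sporadic sizes just above the excluded value $22$. In particular I must check that each base graph I use has a non-triangular edge and that gluing a triangle along such an edge leaves a fresh non-triangular edge available for the next step, so the induction can continue indefinitely; if gluing on a $K_3$ consumed all non-triangular edges this would break. Once a consecutive block of achievable sizes of length at least two (one of each parity) is secured above $22$, repeatedly adding triangles (size $+2$) fills in all larger $n$, and the explicit small constructions handle $n\in\{20,21,23,24\}$ while $n=22$ is left out by design. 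I would close by tabulating exactly which base graph and which sequence of triangle-sums realizes each residue class modulo $2$, confirming coverage of every $n\ge 20$ with $n\ne 22$.
\end{proof}
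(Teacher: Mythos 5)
Your overall strategy --- take base graphs from the classification of Section~\ref{sec:class} and grow them by edge clique sums (Lemma~\ref{lem:NPP10}) with small complete graphs and further copies of $E_9$ --- is the same as the paper's, but two steps in your plan break down as stated. First, the engine of ``repeatedly adding triangles ($+2$)'' cannot run indefinitely: when you glue a $K_3$ onto a non-triangular edge $xy$, that edge becomes part of the new triangle $xyz$, and the two new edges $xz$, $yz$ lie in that triangle as well, so the operation consumes a non-triangular edge and creates none. A fixed base graph therefore supports only as many triangle additions as it has non-triangular edges (six, in the case of $E_9$), and the check you yourself flag as critical --- that gluing a triangle ``leaves a fresh non-triangular edge available for the next step'' --- fails. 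Second, parity: the only unbounded increments you name are $+20$ (gluing on $E_9$) and $+2$ (gluing on $K_3$), both even, so from the odd-size base $E_9$ you reach only odd sizes; and the natural even-size bases ($K_7^-$ and the maximal $2$-apex graphs) have every edge in a triangle, so Lemma~\ref{lem:NPP10} gives no way to grow them. You assert that ``base graphs of both parities'' will cover all residues, but you never exhibit an even-size maxnik graph possessing a non-triangular edge.

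The paper resolves both problems simultaneously: it glues $K_m$ for $3 \leq m \leq 6$ (adding ${m \choose 2}-1 \in \{2,5,9,14\}$ edges, so odd increments are available) onto at most six of the non-triangular edges of the chain $G_i$ of $i$ copies of $E_9$. Since $|E(G_{i+1})| - |E(G_i)| = 20$, every block of $20$ consecutive sizes is filled using at most six gluings, so the finite supply of non-triangular edges is never exhausted, and each new copy of $E_9$ in the chain replenishes that supply. Finally, note that the sporadic size $24$ cannot be reached by Lemma~\ref{lem:NPP10} at all: $20+4$ fails because $K_7^-$ has no non-triangular edge, and $21+3$ fails because $3$ is not of the form ${m \choose 2}-1$. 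The paper handles $24$ with the triangle clique-sum Lemma~\ref{lem:New}, applied to $E_9$ and $K_4$ along a $3$-cycle bounding a disk --- a mechanism your sketch does not invoke, and which you would need to make the cases $n \in \{23,24\}$ (which you defer to ``other means'') actually go through.
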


\begin{proof}
The graph $K_7^{-}$ is a maxnik of size 20 by Theorem \ref{thm:ord8}. The graph $E_9$ has a knotless embedding where the 3-cycle $abc$ bounds a disk~\cite{M}, 
shown in Figure~\ref{fig:E9}.  As no vertex in $E_9$ is adjacent to all three of these vertices, we may use Lemma~\ref{lem:New} to construct a maxnik graph of size 24 by taking a clique sum over $K_3$ of 
$E_9$ and $K_4$. So, we may assume $n \geq 21$ and $n \not\in \{22,24\}$.

The graph $E_9$ has size 21 and 6 non-triangular edges. Let $G_i$ denote the maxnik graph obtained from $i$ copies of $E_9$ by gluing along non-triangular edges.  

Note that $|E(G_{i+1})| - |E(G_i)| = 20$, and that $G_i$ contains at least 6 non-triangular edges for any $i$.  We will now work by induction.  Suppose that maxnik graphs exist for size $n < |E(G_i)|$ and for size $|E(G_i)| +1$ and size $|E(G_i)| +3$.   Then it suffices to show that there exist maxnik graphs of size $|E(G_i)| +k$ for $4 \leq k \leq 19$ and $k \in \{0, 2, 21, 23\}$.  

Clearly a maxnik graph of size $|E(G_i)|+0$ exists, as $G_i$ is maxnik.  We may form a new maxnik graph from $G_i$ by gluing a copy of $K_m$  (for $3\leq m \leq 6$) along a non-triangular edge of $G_i$ by Lemma \ref{lem:NPP10}.  As $G_i$ has at least 6 non-triangular edges, we can glue on up to 6 such graphs, each adding ${m \choose 2} -1$ edges.  Thus, to prove the result we need only to be able to form the desired values of $k$ using six or fewer addends from the set $\{2,5,9,14\}$.  This is clearly possible.  


In the base case $i=1$, we have a maxnik graph of size $|E(G_1)|=21$, and we excluded graphs of size 22 and 24  ($|E(G_1)| +1$ and $|E(G_1)| +3$) above.  Thus we may form maxnik graphs of size $|E(G_1)| + k$ for the $k$ of interest as before.   
\end{proof}

\begin{rem} A computer search shows there are no size 22 maxnik graphs. 
Our strategy is based on the classification through size 22 of the obstructions to $2$-apex in \cite{MP}.
Let's call such graphs MMN2A (minor minimal not 2-apex). All but eight of the graphs in the classification
are MMIK. Two exceptions are $4$-regular of order 11, the other six are in the Heawood family.

A maximal $2$-apex graph has $5n-15$ edges where $n$ is the number
of vertices. By Theorem~\ref{thm:twoapex} a maxnik graph $G$ of size 22 is not $2$-apex and therefore has a MMN2A minor.
Since $G$ is nIK, it must have one of the eight exceptions as a minor. Using a computer, we verified that no size 22 expansion 
of any of these eight graphs is maxnik.
\end{rem}

Theorem \ref{thm:size} implies that there are maxnik graphs of nearly every size.  Note that there are maxnik graphs of any order, as there exist maximal 2-apex graphs of any order and by Theorem \ref{thm:twoapex} these graphs are maxnik.

We have considered the minimal number and the possible number of edges in a maxnik graph. We now consider other aspects of maxnik graphs' structure, in particular, the maximal and minimal degree. Since $\Delta(G) = |V|-1$ for maximal $2$-apex graphs, there are maxnik graphs with arbitrarily large $\Delta(G)$.

\begin{prop} The complete graph $K_3$ is the only maxnik graph with maximal degree two.
\end{prop}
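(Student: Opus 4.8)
The plan is to use the degree hypothesis, together with the structural results already in hand, to force $G$ to be a cycle, and then to rule out every cycle except the triangle.

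First I would note that maximum degree two, with equality, supplies a vertex of degree exactly two, whose two neighbors give $|V| \geq 3$. Theorem~\ref{thm:mind} then applies: $G$ is $2$-connected, so in particular connected, and $\delta(G) \geq 2$. Combined with $\Delta(G) = 2$, this pins every vertex to degree exactly two, so $G$ is $2$-regular. A connected $2$-regular graph is a single cycle $C_n$ with $n \geq 3$, so the problem reduces to deciding which cycles are maxnik.

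Next I would dispatch the case $n \geq 4$. Here $C_n$ is planar and not complete, so there is a missing chord $e$, and $C_n + e$ is again planar. Since a planar graph embeds in a plane in $\R^3$ with every cycle bounding a disk, $C_n + e$ is nIK. Thus $C_n$ has a proper nIK supergraph and cannot be maxnik. (Equivalently, $C_n$ is $2$-apex but not maximal $2$-apex for $n \geq 4$, so Theorem~\ref{thm:twoapex} yields the same conclusion.) This leaves only $n = 3$, that is $G = C_3 = K_3$; being complete, it admits no new edge, and it is nIK since its unique cycle is a triangle, so $K_3$ is indeed maxnik.

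Because every step is forced by a result already established, I do not anticipate a genuine obstacle. The only point needing slight care is the bookkeeping at the outset: verifying that $\Delta(G) = 2$ really does produce a degree-two vertex, hence $|V| \geq 3$, so that the minimum-degree conclusion of Theorem~\ref{thm:mind} is available to upgrade ``maximum degree two'' into ``$2$-regular.''
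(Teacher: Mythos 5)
Your proof is correct and follows essentially the same route as the paper: both use Theorem~\ref{thm:mind} to force $G$ to be a $2$-regular connected graph, hence a cycle, and then exclude $C_n$ for $n \geq 4$. Your direct finish (a chord keeps $C_n$ planar, hence nIK) is a minor elementary variant of the paper's appeal to Theorem~\ref{thm:twoapex}, which you also note parenthetically.
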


\begin{proof} Suppose $G$ is maxnik with $\Delta(G) = 2$. Then $|G| \geq 3$ and,
by Theorem~\ref{thm:mind}, $\delta(G) = 2$ and $G$ is connected.
So, $G$ is a cycle. Now, a cycle is planar, hence $2$-apex, and by Theorem~\ref{thm:twoapex}, 
$G$ is maximal $2$-apex. However, 
a cycle is not maximal $2$-apex unless it is $K_3$. 
\end{proof}

Note that Lemma \ref{lem:deg3_nghs} has the following two immediate corollaries.

\begin{cor}
If a graph $G$ is maxnik and has $\Delta(G)=3$, then $G$ is 3-regular.
\end{cor}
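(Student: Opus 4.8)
The plan is to exploit Lemma~\ref{lem:deg3_nghs} together with the $2$-connectivity guaranteed by Theorem~\ref{thm:mind}, showing that the mere presence of a degree-$3$ vertex under the constraint $\Delta(G)=3$ forces $G$ to be $K_4$, which is of course $3$-regular. Since $\Delta(G)=3$, there is at least one vertex $v$ of degree $3$; write $N(v)=\{x_1,x_2,x_3\}$. This is the only hypothesis I need to get started, and it is automatic: $\Delta(G)=3$ means the maximum degree is attained.

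First I would apply Lemma~\ref{lem:deg3_nghs} to $v$, which yields that $x_1x_2$, $x_1x_3$, and $x_2x_3$ are all edges of $G$. Hence $\{v,x_1,x_2,x_3\}$ induces a copy of $K_4$. Each $x_i$ is then already adjacent to $v$ and to the other two $x_j$, so $\deg(x_i)\geq 3$; since $\Delta(G)=3$, in fact $\deg(x_i)=3$, which means $x_i$ has no neighbor outside $\{v,x_1,x_2,x_3\}$. The same is true of $v$, whose three neighbors are exactly $x_1,x_2,x_3$. Consequently no edge leaves this four-vertex set, so it is a connected component of $G$.

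Finally, $G$ is $2$-connected, hence connected, by Theorem~\ref{thm:mind}, so $G$ coincides with this component; that is, $G=K_4$, which is $3$-regular. I do not anticipate a real obstacle, as this genuinely is an immediate corollary of Lemma~\ref{lem:deg3_nghs}. The single point meriting care is that, a priori, $\delta(G)\geq 2$ and $\Delta(G)=3$ would permit vertices of degree $2$ elsewhere in $G$; the $K_4$-component argument is precisely what rules this out, since connectivity leaves no room for any additional vertices once the $K_4$ is seen to be closed off.
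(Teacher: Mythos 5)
Your argument is correct and is essentially the route the paper intends (the paper states this as an immediate consequence of Lemma~\ref{lem:deg3_nghs} without writing out a proof): the lemma makes the neighborhood of any degree-$3$ vertex a triangle, forcing each neighbor to have degree exactly $3$ under the cap $\Delta(G)=3$, and connectivity from Theorem~\ref{thm:mind} finishes it. In fact you prove the stronger conclusion $G=K_4$, which also subsumes the paper's subsequent corollary that a $3$-regular maxnik graph must be $K_4$.
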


\begin{cor}
If a graph $G$ is maxnik and 3-regular, then $G=K_4$.
\end{cor}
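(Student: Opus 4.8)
The final statement is the corollary asserting that a 3-regular maxnik graph must be $K_4$. The plan is to combine the structural constraint from Lemma~\ref{lem:deg3_nghs} with the degree bookkeeping forced by 3-regularity. First I would take $G$ to be 3-regular and maxnik, and fix any vertex $v$. Since $\deg(v)=3$, Lemma~\ref{lem:deg3_nghs} tells us that the three neighbors $x_1,x_2,x_3$ of $v$ are pairwise adjacent, so $\{v,x_1,x_2,x_3\}$ induces a copy of $K_4$ inside $G$.

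The key observation is then that this $K_4$ already exhausts the degree budget of every vertex it contains. In $K_4$ each of the four vertices is adjacent to the other three, so within $\{v,x_1,x_2,x_3\}$ every vertex already has degree $3$. Because $G$ is 3-regular, each of these vertices has degree exactly $3$ in all of $G$ as well. Hence none of $v,x_1,x_2,x_3$ can have any neighbor outside this set, so no edge leaves $\{v,x_1,x_2,x_3\}$. I would spell this out as: the induced $K_4$ is a connected component of $G$.

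Finally I would invoke connectivity. By Theorem~\ref{thm:mind} a maxnik graph is $2$-connected, in particular connected, so $G$ can have only one component. Since $\{v,x_1,x_2,x_3\}$ is a component and equals $K_4$, we conclude $G=K_4$. As a sanity check, $K_4$ is $2$-apex (indeed planar) and is maximal $2$-apex among 3-regular graphs, so by Theorem~\ref{thm:twoapex} it is genuinely maxnik, confirming the conclusion is non-vacuous.

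I do not expect any serious obstacle here; the corollary is essentially immediate once Lemma~\ref{lem:deg3_nghs} produces the $K_4$. The one point requiring a moment's care is the transition from ``the neighbors of $v$ are mutually adjacent'' to ``$\{v,x_1,x_2,x_3\}$ is a full component,'' which relies crucially on $3$-regularity to close off the degrees; without regularity the $x_i$ could have further neighbors. Invoking connectedness from Theorem~\ref{thm:mind} then finishes the argument cleanly.
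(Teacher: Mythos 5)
Your argument is correct and is exactly the ``immediate'' consequence the paper intends: Lemma~\ref{lem:deg3_nghs} forces the closed neighborhood of any vertex to induce a $K_4$, 3-regularity seals off that $K_4$ as a component, and connectivity from Theorem~\ref{thm:mind} finishes it. The paper states this corollary without proof, and your write-up supplies precisely the intended reasoning.
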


These results motivate the following question.

\begin{que}
Do there exist regular maxnik graphs other than $K_n$ with $n<7$?
\end{que}

A maximal $2$-apex graph will have $\Delta(G) = |V|-1$ and $\delta(G) \leq 7$, so if there is such a regular maxnik graph with $|V|  \geq 7$, it is not $2$-apex. 
However, through order nine, our two examples of maxnik non $2$-apex graphs are both close to regular, having
$\Delta(G) - \delta(G) \leq 2$.  This suggests the answer to our question is likely yes.

For $\delta(G)$, Theorem~\ref{thm:mind}
gives a lower bound of two that is realized by the infinite family of Theorem~\ref{thm:NPP5}.
On the other hand, by starting with a planar triangulation
of minimum degree five, we can construct graphs with $\delta(G) = 7$ that are maximal $2$-apex, and hence maxnik. At the same time, 
since a graph with $|E| \geq 5|V| - 14$ has a $K_7$ minor and is IK~\cite{Md,CMOPRW}, a maxnik graph must have 
$\delta(G) \leq 9$. 
It seems likely that there are examples that realize this upper bound on $\delta(G)$.  Table~\ref{tbl:deg} records the 
range of degrees for maxnik graphs through order nine.

\begin{table}
\begin{center}
\begin{tabular}{c|ccccccccc}
$|V|$ & 1 & 2 & 3 & 4 & 5 & 6 & 7 & 8 & 9 \\ \hline
$\delta(G)$ & 0 & 1 & 2 & 3 & 4 & 5 & 5 & 5 or 6 & 4 to 7 \\
$\Delta(G)$ & 0 & 1 & 2 & 3 & 4 & 5 & 6 & 7 & 5 to 8 
\end{tabular} 
\end{center}
\caption{Maximal and minimial degrees of maxnik graphs through order nine.
\label{tbl:deg}%
}
\end{table}

\section{Prime and Composite Maxnik Graphs}
\label{sec:prime}

We will call a graph {\em composite} if it is the clique sum of two graphs.  Otherwise it is {\em prime}. These terms are analogous to knots, where a knot is composite if is the connected sum of two non-trivial knots, and prime otherwise. In this section, we
classify the maxnik graphs described earlier in this paper as prime and composite. We remark that it may be of interest to study other instances of prime graphs, 
for example, prime maximal planar or prime maxnil.

The infinite families of maxnik graphs constructed in Section \ref{sec:bounds} are all composite, as they are clique sums of smaller maxnik graphs. 

Note that $K_n$ is prime, so all maxnik graphs of order 6 or less are prime.

\begin{figure}[htb]
\begin{center}
\includegraphics[scale=0.5]{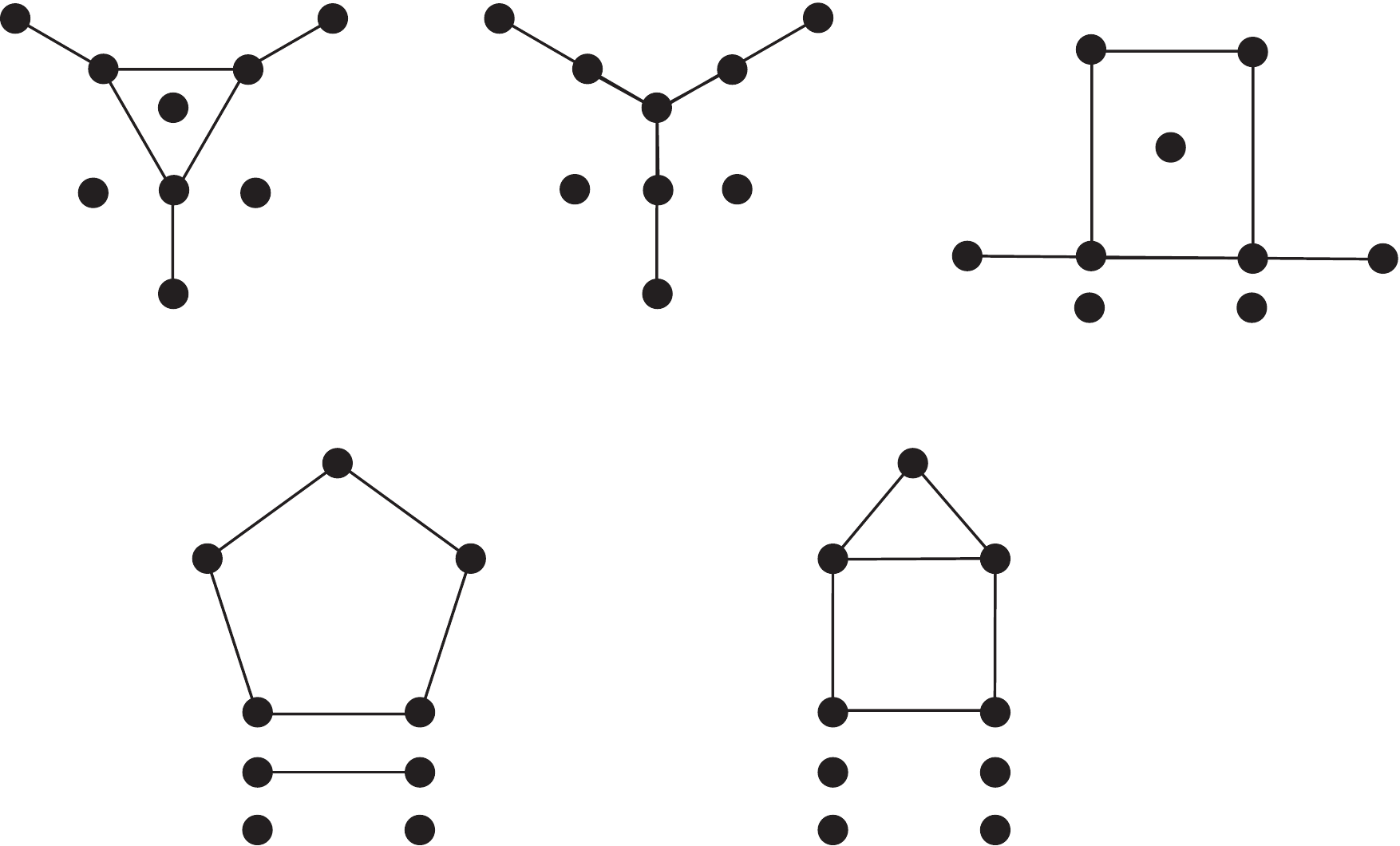}
\end{center}
\caption{Complements of the maximal 2-apex graphs of order nine. Top row, L to R: Big-Y, Long-Y, Hat; Bottom row: Pentagon-bar and House.
\label{fig:tri}%
}
\end{figure}

\begin{prop}
The following maxnik graphs are composite: $K_7^-$, $K_8 - P_3$, and four of the five maximal 2-apex graphs on 9 vertices, specifically Big-Y, Long-Y, Hat and House.
\end{prop}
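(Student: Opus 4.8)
The goal is to show that each listed graph is a clique sum of two smaller graphs. By the definition given, a graph is composite if it is the clique sum over some $K_t$ of two graphs. The natural strategy is to exhibit, for each graph, an explicit vertex cut set of size $t \in \{1, 2, 3\}$ that splits the graph into two pieces, each of which is then recognized as a smaller (and ideally itself maxnik) graph. Since these graphs are $2$-connected by Theorem~\ref{thm:mind}, the cut must have size at least $2$, so I expect the relevant decompositions to be edge sums (clique sums over $K_2$) or clique sums over $K_3$.

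The plan is to handle the graphs in increasing order of complexity. For $K_7^-$, I would look for a separating edge: since $K_7^-$ is $K_7$ minus an edge, removing the two non-adjacent vertices (the endpoints of the missing edge) or finding a size-two cut should display it as an edge sum of two copies of a complete graph such as $K_6$, glued along a common edge. I would verify the cut set $\{x,y\}$ satisfies Lemma~\ref{lem:NPP9}, which guarantees $xy$ is an edge and each induced piece is maxnik, so the decomposition is genuinely a clique sum of smaller maxnik graphs. For $K_8 - P_3$, I would similarly identify a size-two or size-three vertex cut coming from the structure of the deleted path $P_3$, and recognize the two sides as smaller complete-type graphs summed along the resulting clique.

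For the four order-nine maximal $2$-apex graphs (Big-Y, Long-Y, Hat, House), the cleanest route is through their complements, which are displayed in Figure~\ref{fig:tri}. A clique sum decomposition of $G$ over a clique $S$ corresponds to a structural feature in the complement $\overline{G}$: a clique cut set $S$ in $G$ means $S$ is an independent set in $\overline{G}$ whose removal, together with the bipartition of the remaining vertices imposed by non-adjacency, splits $\overline{G}$ appropriately. I would read off from each complement picture (the Y-shaped, hat-shaped, pentagon-bar, and house-shaped complements) a size-two or size-three vertex cut $S$ of $G$, confirm via Lemma~\ref{lem:NPP9} that $S$ induces a clique and each side is maxnik, and thereby express each of the four graphs as a nontrivial clique sum. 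The names themselves (Big-Y, Long-Y, Hat, House) hint at the branching or attachment structure in the complement that produces such a cut.

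The main obstacle will be verifying that the two pieces of each decomposition are themselves the \emph{smaller} maxnik graphs one expects, rather than merely nIK subgraphs, and confirming that the cut set is actually a clique in $G$ (equivalently an independent set in $\overline{G}$) so that the split is a legitimate clique sum. Lemma~\ref{lem:NPP9} does most of this work automatically: given a vertex cut set $S = \{x,y\}$ of a maxnik graph, it forces $xy \in E(G)$ and forces each induced piece $\langle G_i, S\rangle_G$ to be maxnik. So once I locate a genuine $2$-cut in each of these graphs, Lemma~\ref{lem:NPP9} immediately delivers the composite structure; the remaining work is the finite, picture-reading task of exhibiting the cut and naming the resulting summands. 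I would present the decompositions concretely, one graph at a time, rather than seeking a uniform argument, since the five graphs have different cut structures.
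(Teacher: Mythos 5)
Your approach is built on a false structural expectation, and it fails already at the first graph. You anticipate clique sums over $K_2$ or $K_3$ and plan to locate 2-cuts or 3-cuts, invoking Lemma~\ref{lem:NPP9} to certify them. But these six graphs are dense, near-complete graphs with no small clique cut sets. In $K_7^-$ the only way to separate the two non-adjacent vertices is to delete all five of their common neighbors, so $K_7^-$ is $5$-connected; the correct decomposition, and the one the paper gives, is two copies of $K_6$ glued over a $5$-clique (the five common neighbors), not ``two copies of $K_6$ glued along a common edge'' --- that graph would have $10$ vertices, not $7$. Likewise $K_8-P_3$ and the four order-nine maximal $2$-apex graphs (which are joins of planar triangulations with $K_2$, hence at least $5$-connected) admit no $2$- or $3$-cuts; each is exhibited in the paper as a clique sum with $K_6$ over a $5$-clique, with the case analysis governed by how the $5$-clique meets the missing path in the complement. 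Theorem~\ref{thm:mind} gives only a lower bound of $2$ on connectivity; it does not license the inference that the minimal separating clique has size $2$ or $3$.

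Consequently Lemma~\ref{lem:NPP9}, which is stated only for cut sets of size two, does not apply anywhere in this proposition, and the ``picture-reading'' step you defer to cannot succeed as described because the object you are looking for (a small independent cut in the complement) does not exist. The fix is conceptually simple but changes the whole computation: look for large separating cliques. Note also that the paper's definition of composite only requires a decomposition as a clique sum of two smaller graphs; you do not need the summands to be maxnik, so the extra verification you worry about (via Lemma~\ref{lem:NPP9}) is not needed even in principle --- exhibiting the $5$-clique and the two sides suffices.
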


\begin{proof}

The graph $K_7^-$ is formed from two copies of $K_6$ summed over a 5-clique.

The graph $K_8 - P_3$ is formed from  $K_7^-$ clique sum $K_6$ over a 5-clique, where the 5-clique contains exactly one endpoint of the missing edge.

Big-Y is formed from $K_8 - P_3$ clique sum $K_6$ over a 5-clique, where the 5-clique contains both of the terminal vertices of the 3-path.

Long-Y is formed from $K_8 - $ 3 disjoint edges clique sum $K_6$ over a 5-clique.

Hat is formed from $K_8-P_3$ clique sum $K_6$ over a 5-clique, where the 5-clique contains one terminal vertex and one (non-adjacent) interior vertex of the 3-path.

House is formed from $K_8-P_3$ clique sum $K_6$ over a 5-clique, where the 5-clique contains one interior vertex of the 3-path.

\end{proof}

\begin{lem}
If $G^c$ is of the form $K_2 \coprod H$, then either $G$ is prime, or $G$ is the clique sum of two copies of $K_n$ over an $n-1$ clique.
\label{lem:comp_K2}
\end{lem}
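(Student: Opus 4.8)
The plan is to convert the hypothesis on the complement into an explicit picture of $G$, and then show that admitting a clique separator forces $G$ into one rigid shape. Writing $G^c = K_2 \coprod H$, the $K_2$ summand consists of two vertices $u,w$ that are adjacent in $G^c$ and isolated from everything else there. Dualizing, in $G$ itself $u$ and $w$ are each adjacent to every other vertex but \emph{not} to one another. I would set $R = V(G)\setminus\{u,w\}$, so that $R$ carries $H$ via $G[R] = H^c$, and both $u$ and $w$ are joined to all of $R$. The dichotomy will turn out to be governed entirely by whether $R$ is a clique.

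Next I would recall that $G$ is composite exactly when it admits a clique separator: a clique $K$ with $G\setminus K$ disconnected into two nonempty pieces. The goal then becomes to prove that such a $K$ can exist only when $R$ is itself a clique. The engine driving everything is the near-universality of $u$ and $w$: since each is adjacent to all vertices except the other, either one tends to connect up whatever survives the deletion of $K$.

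The key steps form a short case analysis on where $u$ and $w$ sit relative to $K$. Since $u\not\sim w$ and $K$ is a clique, $u$ and $w$ cannot both lie in $K$. If exactly one of them, say $u$, lies in $K$, then $w$ is adjacent to every vertex of $G\setminus K$ (its unique non-neighbor $u$ has been deleted), so $G\setminus K$ is connected, contradicting that $K$ separates. If neither lies in $K$, let $S = R\setminus K$ collect the remaining vertices of $G\setminus K$; every vertex of $S$ is adjacent to both $u$ and $w$, so whenever $S\neq\emptyset$ the path through any $s\in S$ joins $u$ to $w$ and again makes $G\setminus K$ connected, a contradiction. The only surviving possibility is $u,w\notin K$ with $S=\emptyset$, i.e.\ $K=R$ and $G\setminus K=\{u,w\}$ is a pair of isolated vertices. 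For this $K$ to be an admissible clique separator, $R$ must be complete. Hence $H$ is edgeless and $G = K_m^-$ with $m = |R|+2$, which is precisely the clique sum of two copies of $K_{m-1}$ glued along the $(m-2)$-clique $R$ (that is, two copies of $K_n$ over an $(n-1)$-clique with $n=m-1$). If instead $R$ is not complete, no clique separator exists and $G$ is prime, giving the stated alternative.

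The main obstacle I expect is not any hard estimate but rather being scrupulous with the definition of clique sum and clique separator, and making the case analysis genuinely exhaustive by handling the degenerate possibilities (a separated piece reducing to a single vertex, or $S=\emptyset$, or very small $|R|$). Once the near-universal role of $u$ and $w$ is isolated, each connectivity observation is immediate, so the real work is organizational: confirming that $K=R$ is the unique candidate separator and that its validity is equivalent to $R$ being a clique.
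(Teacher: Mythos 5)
Your proof is correct and follows essentially the same route as the paper's: both arguments exploit that the two $K_2$-vertices of $G^c$ are adjacent in $G$ to everything except each other, observe they cannot both lie in the separating clique, and conclude that the parts outside the clique must reduce to exactly these two vertices, forcing $G = K_m$ minus an edge. Your reformulation via clique separators and the explicit three-way case split is just a slightly more systematic packaging of the paper's argument; no gap.
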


\begin{proof}
Call the two vertices of the $K_2$ in $G^c$  $v_1$ and $v_2$.  Suppose that $G$ is a clique sum of $G_1$ and $G_2$ over a clique $C$.  We cannot have both $v_1$ and $v_2$ in $C$, as edge $v_1v_2$ is in $G^c$.  Without loss of generality, we may assume that $v_1$ is in $G_1 \setminus C$.  So, in $G^c$, $v_1$ must be adjacent to every vertex of $G_2 \setminus C$.  Thus $G_2 \setminus C$ is $v_2$.  As the only neighbor of $v_1$ in $G^c$ is $v_2$, $v_1$ is adjacent to every vertex in $C$.  Similarly for $v_2$.  Thus if $G$ is composite, it is the clique sum of $K_n$ and $K_n$ over an $n-1$ clique.
\end{proof}

\begin{cor}
The following maxnik graphs are prime: Pentagon-bar, $G_{9,29}$ and $K_8 -$ 3 disjoint edges.
\end{cor}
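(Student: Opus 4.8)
The plan is to apply Lemma~\ref{lem:comp_K2} to each of the three graphs by first examining the structure of their complements. For each graph $G$ in the statement, I would begin by computing the complement $G^c$ and checking whether it has the form $K_2 \amalg H$, i.e.\ whether $G^c$ contains an isolated edge (a pair of vertices $v_1, v_2$ joined to each other but to nothing else in $G^c$). The graph $K_8 -$ 3 disjoint edges is the most transparent case: its complement is three disjoint edges on eight vertices, which is certainly of the form $K_2 \amalg H$ where $H$ is the remaining two disjoint edges. For $G_{9,29}$, the excerpt already records that it is the complement of $K_1 \sqcup K_2 \sqcup C_6$, so $G_{9,29}^c = K_1 \sqcup K_2 \sqcup C_6$ plainly contains a $K_2$ summand. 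For Pentagon-bar I would read off its complement from Figure~\ref{fig:tri} and verify it likewise splits off an isolated $K_2$ (the ``bar'' suggesting a single edge component).

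Once each complement is seen to be of the form $K_2 \amalg H$, Lemma~\ref{lem:comp_K2} gives a dichotomy: either $G$ is prime, or $G$ is the clique sum of two copies of $K_n$ over a common $(n-1)$-clique. So the second step is to rule out the latter alternative for all three graphs. The clean way to do this is a counting/degree argument: if $G = K_n \cup_{K_{n-1}} K_n$ on $N$ vertices, then $N = n+1$, and $G$ would be $K_{n+1}$ minus a single edge, so $G^c$ would be exactly one edge (a single $K_2$ with all other vertices isolated). Thus the ``composite'' branch of Lemma~\ref{lem:comp_K2} forces $G^c$ to consist of precisely one edge plus isolated vertices. I would then observe that none of our three complements has this shape: each of them contains, beyond the split-off $K_2$, further nontrivial structure (additional edges in $H$), so $H$ is not an edgeless graph. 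Hence the composite alternative is impossible and each graph must be prime.

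The main obstacle I anticipate is purely bookkeeping rather than conceptual: I must read off the three complements correctly, particularly Pentagon-bar, whose description lives in Figure~\ref{fig:tri} rather than in the text, and confirm in each case both (a) that an isolated $K_2$ component exists so the lemma applies, and (b) that the leftover graph $H$ has at least one edge so the degenerate ``two $K_n$'s'' case is excluded. For $G_{9,29}$ and $K_8 -$ 3 disjoint edges this is immediate from the stated complement decompositions; for Pentagon-bar it requires one careful look at the figure. I expect the whole argument to be short, since Lemma~\ref{lem:comp_K2} has already done the structural heavy lifting and all that remains is to verify its hypotheses and eliminate the single degenerate composite case by an edge count on the complement.
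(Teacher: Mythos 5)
Your proposal is correct and follows the paper's own argument: apply Lemma~\ref{lem:comp_K2} after observing each complement splits off a $K_2$, then exclude the composite alternative by noting none of the graphs is $K_n$ minus a single edge. Your explicit identification of the degenerate case as $K_{n+1}$ minus an edge (equivalently, a complement with exactly one edge) is just a slightly more detailed rendering of the same step.
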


\begin{proof}
Each of these graphs has a complement of the form $K_2 \coprod H$.  As these graphs are not of the form $K_n - $ a single edge, they are prime by Lemma \ref{lem:comp_K2}.
\end{proof}

Note that if $G$ is a clique sum over a $t$-clique, it is not $(t+1)$-connected.

\begin{prop}
 The maxnik graph $E_9$ is prime.
\end{prop}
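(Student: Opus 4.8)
The plan is to show that $E_9$ admits no decomposition as a clique sum, which by the preceding remark amounts to checking that $E_9$ is not $(t+1)$-connected for whatever $t$ would be required, and more directly to showing that no vertex cut set of $E_9$ is itself a clique. First I would recall the key structural constraint furnished by Lemma~\ref{lem:NPP9}: if $E_9$ were composite, it would be a clique sum over some clique $C = V(G_1) \cap V(G_2)$, and the vertices of $C$ would form a cut set whose deletion disconnects the graph, with $C$ inducing a complete subgraph. So the task reduces to a finite check on the small graph $E_9$: verify that $E_9$ has no clique cut set, i.e., no set $S$ of mutually adjacent vertices whose removal leaves $E_9 \setminus S$ disconnected.

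The main computation I would carry out is to enumerate the cliques of $E_9$ and test each as a candidate cut set. Since $E_9$ is a nine-vertex graph of size $21$ with $\Delta(G) - \delta(G) \le 2$ (degrees ranging over $4$ and $5$, as used in Theorem~\ref{thm:NPP5}), its cliques are small. A clique sum over $K_t$ requires the summands to each have more vertices than $C$, so I would only need to consider cliques $S$ with $|S| \le 7$, and in practice the relevant cases are $t \in \{1,2,3\}$: a $K_1$ or $K_2$ cut would contradict $2$-connectivity from Theorem~\ref{thm:mind}, and for $t=3$ (or larger) I would examine each triangle (respectively larger clique) of $E_9$ and confirm that its deletion leaves a connected graph. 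Because every clique cut set must in particular contain a cut set, and $E_9$ is $2$-connected, it suffices to rule out clique cut sets of size $\ge 3$, which is a bounded check over the triangles and $4$-cliques of $E_9$.

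I would organize the verification using the complement or the explicit adjacency structure of $E_9$ visible from Figure~\ref{fig:E9}. Identifying which triples of vertices are triangles and then confirming that each such triangle's complement-in-vertices stays connected is the crux; alternatively, one can argue more cleanly that $E_9$ is $3$-connected, since a clique sum over $K_t$ produces a graph that is not $(t+1)$-connected, so establishing $3$-connectivity immediately rules out $t=1,2$, and then only $t=3$ clique sums (which would force $E_9$ to have a triangle cut set) remain to exclude by inspection. Showing $3$-connectivity is itself a finite check: verify no pair of vertices disconnects $E_9$.

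The step I expect to be the main obstacle is the case analysis for triangle cut sets ($t=3$): unlike the lower-connectivity cases, which follow instantly from Theorem~\ref{thm:mind}, excluding a triangular clique sum requires actually knowing the triangles of $E_9$ and checking connectivity of each residual graph, and the symmetries of $E_9$ must be exploited to keep this manageable rather than checking all triples blindly. The honest way to present this is to reduce to a small number of triangle orbits under the automorphism group of $E_9$ and dispatch each, noting that in every case the removal of a triangle leaves a connected graph because $E_9$'s degree sequence and edge count leave too few edges in any triangle to separate the remaining six vertices.
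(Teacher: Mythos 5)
Your overall strategy is the right one and matches the paper's in spirit: a clique sum over $K_t$ forces a clique cut set of size $t$, so primality follows from showing $E_9$ has no clique cut set. But you stop the connectivity argument at $3$-connectivity and then defer the $t=3$ case to a triangle-by-triangle check, which you yourself flag as the main obstacle --- and the justification you sketch for it (``the degree sequence and edge count leave too few edges in any triangle to separate the remaining six vertices'') is not a valid argument; whether deleting a triangle disconnects the graph is a structural question that degree and edge counts alone do not settle. The paper closes this in one line by using two sharper facts: the largest clique in $E_9$ is a $3$-clique, and $E_9$ is $4$-connected. Together these finish the proof immediately, since a cut set in a $4$-connected graph needs at least $4$ vertices, and $E_9$ has no clique that large. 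So the fix for your proposal is simply to upgrade your finite connectivity check from $3$-connectivity to $4$-connectivity (still a bounded verification on a nine-vertex graph), which subsumes the entire triangle case analysis; you should also note explicitly that $E_9$ has no $K_4$, so cliques of size $\geq 4$ need not be considered at all.
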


\begin{proof}
The largest clique in $E_9$ is a 3-clique, but $E_9$ is 4-connected and hence must be prime.
\end{proof}

\begin{lem}
If $G = H*K_2$, and $G$ is 2-apex, then $G$ is prime maxnik if and only if $H$ is prime maximal planar.
\end{lem}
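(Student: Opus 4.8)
The plan is to decouple the two adjectives, proving ``maxnik $\Leftrightarrow$ maximal planar'' and ``prime $\Leftrightarrow$ prime'' separately, and then combine. Write $u_1, u_2$ for the two vertices of the $K_2$ factor; in $G = H * K_2$ these are universal vertices (adjacent to every other vertex), and $G \setminus \{u_1, u_2\} = H$. Since $G$ is assumed $2$-apex, deleting $u_1, u_2$ already exhibits $H$ as planar.

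First I would dispose of the maxnik part by edge counting. If $H$ has $n$ vertices then $G$ has $n+2$ vertices and $|E(G)| = |E(H)| + 2n + 1$. By Theorem~\ref{thm:twoapex}, $G$ is maxnik iff it is maximal $2$-apex, and (for $n+2 \geq 7$) a $2$-apex graph is maximal $2$-apex exactly when $|E(G)| = 5|V(G)| - 15 = 5n - 5$. Substituting, this is equivalent to $|E(H)| = 3n - 6$, i.e.\ to $H$ being a triangulation, i.e.\ maximal planar. The few small cases ($n \leq 4$, where $G = K_{n+2}$ with $n+2 \leq 6$) are checked directly: there $G = K_{n+2}$ is maxnik and prime while $H = K_n$ is prime maximal planar, so the equivalence holds trivially.

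Next, for primality I would set up a correspondence between nontrivial clique-sum decompositions of $G$ and of $H$. Given $H = H_1 \cup_C H_2$ over a clique $C$, adjoining both universal vertices to each side yields $G = (H_1 * K_2) \cup_{C \cup \{u_1,u_2\}} (H_2 * K_2)$, a clique sum over the clique $C \cup \{u_1,u_2\}$ with summands smaller than $G$; hence $H$ composite implies $G$ composite. Conversely, suppose $G = G_1 \cup_D G_2$ is a nontrivial clique sum over a clique $D$. The key observation is that $u_1, u_2$ must both lie in $D$: if, say, $u_1 \in G_1 \setminus D$, then since $u_1$ is adjacent to everything while a clique sum has no edges between $G_1 \setminus D$ and $G_2 \setminus D$, we would need $G_2 \setminus D = \emptyset$, making the decomposition trivial. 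With $u_1, u_2 \in D$, set $C = D \cap V(H)$ and $H_i = G_i \cap V(H)$; then $C$ is a clique of $H$ and $H = H_1 \cup_C H_2$ is a nontrivial clique sum, so $G$ composite implies $H$ composite. Thus $G$ is prime iff $H$ is prime.

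Combining the two equivalences finishes the lemma: $G$ is prime maxnik iff $G$ is maxnik and prime iff $H$ is maximal planar and prime iff $H$ is prime maximal planar. The only real content is the clique-sum correspondence, and its crux --- the step I expect to be the main obstacle to state cleanly --- is the universality argument forcing both $u_1, u_2$ into the shared clique $D$; everything else (the edge count and the ``build-up'' direction of the correspondence) is routine. I would also double-check that the paper's convention for clique sum retains the edges of the shared clique, as in the earlier examples such as $K_7^- = K_6 \cup_{K_5} K_6$, since the universal vertices $u_1, u_2$ carry edges to all of $G$ that cannot be deleted.
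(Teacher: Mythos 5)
Your proof is correct and follows essentially the same route as the paper: translate maxnik to maximal $2$-apex to maximal planar, and match clique-sum decompositions of $G$ with those of $H$. In fact you are more careful than the paper on the converse primality direction --- the paper simply labels two vertices of the shared clique $C$ and deletes them, implicitly assuming the two join vertices lie in $C$, whereas your universality argument (a join vertex outside the shared clique would force the other side of the sum to be empty) actually justifies that step.
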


\begin{proof}
As $G$ is 2-apex, it is maxnik if and only if it is maximal 2-apex, and $G$ is maximal 2-apex if and only if $H$ is maximal planar.  

If $H$ is composite, then $H$ is the clique sum of $H_1$ and $H_2$ over a $t$-clique. So $G$ is the clique sum of $H_1*K_2$ and $H_2*K_2$ over a $t+2$-clique, and hence $G$ is composite.

As $G$ is maxnik, it must be 2-connected. Hence if $G$ is composite, it must be $G_1$ clique sum $G_2$ over a $t$-clique $C$, with $t \geq 2$.  Label two of the vertices in $C$ as $v_1, v_2$.  Then $H$ is the clique sum of $G_1 \setminus \{v_1, v_2\}$ and $G_2\setminus \{v_1, v_2\}$ over $C\setminus \{v_1, v_2\}$, and thus composite.  
\end{proof}

\begin{cor}
There exist prime maxnik graphs of arbitrarily large size, and of any order $\geq 8$.
\end{cor}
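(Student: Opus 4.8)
The plan is to reduce the corollary to the existence of prime maximal planar graphs of every order and then feed these into the preceding lemma. If $H$ is planar, then $G = H * K_2$ is $2$-apex (deleting the two vertices of the joined $K_2$ leaves the planar graph $H$), so the lemma applies and $G$ is prime maxnik if and only if $H$ is prime maximal planar. Since $G$ has order $|V(H)| + 2$ and, being maximal $2$-apex, has $5|V(H)| - 5$ edges, a family of prime maximal planar graphs realizing all orders $m \geq 6$ will at once produce prime maxnik graphs of every order $\geq 8$ and, as the edge count grows linearly in $m$, of arbitrarily large size.

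Next I would record what primality means for a triangulation. A maximal planar graph on at least four vertices is automatically $3$-connected, so it has no clique separator of size one or two; a clique separator of size three is a separating triangle, and a separating clique of size four would itself contain a separating triangle. Hence such a graph is prime exactly when it has no separating triangle, i.e. exactly when it is $4$-connected. So it suffices to exhibit $4$-connected triangulations of every order $m \geq 6$.

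For the explicit family I would take the bipyramid (double wheel) $B_n$ over $C_n$: equatorial vertices $v_1, \dots, v_n$ forming a cycle, together with two poles $N$ and $S$, each adjacent to every $v_i$ but not to each other. This is a triangulation on $n+2$ vertices. For $n \geq 4$ its only triangles are the faces $N v_i v_{i+1}$ and $S v_i v_{i+1}$: three equatorial vertices are never mutually adjacent because $C_n$ is triangle-free for $n \geq 4$, and $N, S$ are non-adjacent. Thus $B_n$ has no separating triangle and is prime maximal planar for every $n \geq 4$, giving prime maximal planar graphs of all orders $m = n+2 \geq 6$.

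Putting these together, $G = B_n * K_2$ is prime maxnik of order $n + 4$ for each $n \geq 4$, which covers every order $\geq 8$, while its size $5(n+2) - 5$ tends to infinity with $n$, giving arbitrarily large size as well. The only step needing genuine care is the primality check, namely verifying that the bipyramids have no separating triangle; the edge counting and the two reductions above are routine.
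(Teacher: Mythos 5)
Your proof is correct and follows essentially the same route as the paper's: reduce to exhibiting prime maximal planar (equivalently, $4$-connected, separating-clique-free) triangulations of every order $\geq 6$, then join with $K_2$ and invoke the preceding lemma. The only difference is the explicit witness family---you use the bipyramids $B_n$ over $C_n$ (with $B_4$ the octahedron), whereas the paper starts from the octahedron and grows it by repeated edge subdivision within the triangulation; both families do the job.
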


\begin{proof}
The octahedron graph is max planar and 4-connected.   The largest clique it contains is a 3-clique, so it is prime.  New triangulations formed by repeated subdivision of a single edge are 4-connected and maximal planar, but have no 4-clique, hence are prime as well.   Thus all of these graphs give prime maxnik examples when joined with $K_2$.
\end{proof}

We remark that the construction of this family of graphs is similar to the maxnil families with $3n-3$ edges due to J{\o}rgensen~\cite{J} and $3n-5$ edges  
due to Naimi, Pavelescu, and Pavelescu~\cite{NPP}.

\end{document}